	\newcommand{\fk}{\mathfrak{k}}
\newcommand{\g}{\widehat{\mathfrak{g}}}
\newcommand{\fu}{\mathfrak{u}}
\newcommand{\halpha}{\check{\alpha}}
\newcommand{\fsl}{\mathfrak{sl}}
\newcommand{\ft}{\mathfrak{t}}
\newcommand{\fg}{\mathfrak{g}}
\newcommand{\comment}[1]{}
\newcommand{\OO}{\mathscr{O}}
\newcommand{\fb}{\mathfrak{b}}
\newcommand{\gk}{\g_\kappa}
\newcommand{\fn}{\mathfrak{n}}
\newcommand{\sD}{\mathscr{D}}
\newcommand{\on}{\operatorname}
\newcommand{\ftp}{D_\kappa}
\newtheorem{theo}[subsubsection]{Theorem}
\newtheorem*{theo*}{Theorem}
\newtheorem{lemma}[subsubsection]{Lemma}
\newtheorem{cor}[subsubsection]{Corollary}
\newtheorem{pro}[subsubsection]{Proposition}
\theoremstyle{remark}
\newtheorem{re}[subsubsection]{Remark}
\newtheorem{ex}[subsubsection]{Example}
\newcommand{\hs}{\hspace{1mm}}
\newcommand{\rhoc}{\check{\rho}}
\numberwithin{equation}{section}
\newcommand{\Wf}{{}^fW}
\newcommand{\fh}{\mathfrak{h}}
\newcommand{\sC}{\mathscr{C}}
\newcommand{\msf}{\operatorname}
\newcommand{\sW}{\EuScript{W}_\kappa}
\newcommand{\cW}{\mathscr{W}}
\newcommand{\Avr}{\on{Av}_{*}^{\psi, r}}
\newcommand{\sI}{\hs \overset I \star \hs }
\newcommand{\crho}{\check{\rho}}
\renewcommand{\mod}{\operatorname{-mod}}
\newcommand{\DGCat}{\msf{DGCat}_{\on{cont}}}
\begin{document}


\title[]{The Drinfeld--Sokolov reduction of admissible representations of affine Lie algebras}

\author{Gurbir Dhillon}
\address{Yale University \\ New Haven, CT
06511, USA}

\email{gurbir.dhillon@yale.edu}

\date{\today}

\begin{abstract} Fix an affine Lie algebra $\gk$ with associated principal affine W-algebra $\sW$. A basic conjecture of Frenkel--Kac--Wakimoto asserts that Drinfeld--Sokolov reduction sends admissible $\gk$-modules to zero or cohomological shifts of minimal series $\sW$-modules. We prove this conjecture and a natural generalization to the spectrally flowed Drinfeld--Sokolov reduction functors. This extends the  previous results of Arakawa for the minus reduction. \end{abstract}

\maketitle

\section{Introduction}

\subsection{} 
In the seminal work \cite{ff90} Feigin and Frenkel introduced the affine W-algebras into mathematics. These algebras and their representation theory have been intensively studied in the ensuing three decades and have fundamental relations to integrable hierarchies, moduli spaces associated to algebraic curves, and the geometric Langlands program, cf. \cite{araintro17}, \cite{fbz}, \cite{Frbook07} and references therein. 

In this paper we settle a basic conjecture from \cite{ff90}, or more precisely a refinement due to Frenkel--Kac--Wakimoto \cite{fkw}. In the remainder of the introduction we review some of the history and context of the conjecture; the reader may wish to skip directly to Section \ref{s:sres} where we state our results.

\subsection{} The method by which Feigin and Frenkel  produced the W-algebras is cohomological in nature, and takes as its input a simple Lie algebra $\fg$ equipped with an adjoint-invariant bilinear form $\kappa$. Let us write $\gk$ for the affine Lie algebra associated to $\fg$ and $\kappa$, which is a central extension of the loop algebra $\fg(\!(t)\!)$: 
\[
 0 \rightarrow k \cdot \mathbf{1} \rightarrow \gk \rightarrow \fg(\!(t)\!) \rightarrow 0.
\]
By the construction of the associated W-algebra, which we denote by $\sW$, one has a functor of {Drinfeld--Sokolov reduction} between the derived categories of modules
\begin{equation} \label{e:dsr}
   \Psi^+: \gk\mod \rightarrow \sW\mod. 
\end{equation}

Let us mention two basic properties of this functor. First, it sends the vacuum Kac--Moody vertex algebra $\mathbb{V}_{\kappa}$, concentrated in cohomological degree zero, to $\sW$, concentrated in cohomological degree zero. Second, it is constructed via conformal field theoretic techniques. As a consequence, the functor {\em factorizes}. That is, for any smooth algebraic curve $X$, Equation \eqref{e:dsr} naturally extends to a functor between the  versions of the appearing categories of chiral algebra modules defined over the Ran space of $X$, cf. \cite{bdchiral04}.  

\subsection{} In the final pages of \cite{ff90}, Feigin and Frenkel proposed a conjectural property of Drinfeld--Sokolov reduction. Kac and Wakimoto had discovered for any positive rational level $\kappa$ a family of irreducible highest weight $\gk$-modules, the {admissible} modules, distinguished by the modularity properties of their characters \cite{kw88}, \cite{kw89}. Feigin and Frenkel predicted that Drinfeld--Sokolov reduction should send admissible modules of $\gk$ to zero or irreducible {minimal series} representations of $\sW$. The latter had attracted interest in both mathematics and physics due to their appearance in rational conformal field theory, cf. \cite{bouwknegt93survey}.

In particular, by an analysis of Euler characteristics, combined with the  character formula for admissible modules due to Kac--Wakimoto, the
conjecture would imply formulae for the $q$-characters of the minimal series, which was the subject of forthcoming joint work of Frenkel--Kac--Wakimoto \cite{fkw}.

\begin{ex} Let us specialize the preceding discussion to the simplest case of $\fg = \fsl_2$. Here, the algebra $\sW$ is a Virasoro vacuum algebra, whose central charge varies with $\kappa$, and its minimal series are the celebrated minimal models of chiral conformal field theory \cite{bpz84}. As Feigin and Frenkel emphasize in \cite{ff90}, the correspondence with admissible $\widehat{\fsl}_2$-modules had already been proposed and studied in the physical literature \cite{kpz}, \cite{mukhipanda}, and proven by the authors themselves in \cite{ff902}. 
\end{ex}

\subsection{} In the classic paper \cite{fkw}, Frenkel--Kac--Wakimoto performed a comprehensive analysis of the resulting $q$-characters and formulated a series of conjectures based on their findings. Combining these with the Verlinde formula, they obtained a conjectural calculation of the fusion rules for the minimal series. We now recall two important features of their work.  

A first basic discovery of Frenkel--Kac--Wakimoto was that, beyond the case of $\fg = \fsl_2$, Drinfeld--Sokolov reduction could not simply send an admissible module, concentrated in cohomological degree zero, to a minimal series representation, concentrated in cohomological degree zero. Instead, to account for certain signs encountered in their analysis of characters, they conjectured that admissible modules are sent to zero or cohomological shifts of minimal series. Moreover, they gave for each admissible module a prediction for whether its reduction vanished, and if not the exact cohomological shift and the highest weight for the obtained minimal series representation. These conjectures intricately depend on the level $\kappa$ and the highest weight of the admissible module. They have been since regarded as the definitive formulation of the prediction announced by Feigin--Frenkel on Drinfeld--Sokolov reduction.  

A second basic contribution of Frenkel--Kac--Wakimoto was their introduction of another functor
\begin{equation} \label{e:mdsr}
  \Psi^-: \gk\mod \rightarrow \sW\mod, 
\end{equation}
which they called the minus Drinfeld--Sokolov reduction. In the case of $\fg = \fsl_2$, this had been previously considered by Feigin--Frenkel \cite{ff902}. This functor, which is a variant of the usual, i.e. plus, Drinfeld--Sokolov reduction, is less immediate to construct using conformal field theoretic techniques. Relatedly, it no longer exchanges the vacuum vertex algebras, and does not extend to a factorizable functor.

However, a crucial realization of Frenkel--Kac--Wakimoto was that the minus reduction should again send admissible modules to zero or minimal series, and in a drastically simpler fashion than the plus reduction. More precisely, they conjectured that the reduction of an admissible series representation would not vanish if and only if its highest weight, considered as a weight of the abstract Cartan of the finite dimensional algebra $\fg$, was antidominant. They further conjectured that the resulting $\sW$-module would always be a minimal series representation, concentrated in cohomological degree zero. 

 The findings of Frenkel--Kac--Wakimoto, and in particular their conjectures on the plus and minus reductions, have proven highly influential. They continue to guide developments in the subject, e.g. the  emerging connections between minimal models and the quantum Langlands correspondence. We note that for the latter connection, and more generally questions of a global nature involving conformal blocks, the factorizability of the plus reduction plays an important role. 

\subsection{} After both conjectures lay open for over a decade, a breakthrough  was made by Arakawa. In the tour-de-force works \cite{ara04}, \cite{ara07}, he completely proved the conjecture for the minus reduction, and verified many cases of the conjecture for the plus reduction. His work moreover yielded character formulae for every simple highest weight $\sW$-module, extending the formulae for minimal series conjectured by Frenkel--Kac--Wakimoto. We remark that an alternative proof of the conjecture for the minus reduction was given by the author and Raskin recently in the works \cite{whit}, \cite{locw}.

\subsection{} The decade following Arakawa's work has seen tremendous progress in the study of W-algebras. Notably, many of the main conjectures from the early days of the subject, e.g. the relation with coset models and the rationality of minimal quotients, have been fully resolved  \cite{ara15}, \cite{ara19}. However, to our knowledge, no further cases of Frenkel--Kac--Wakimoto's conjecture on the plus reduction were obtained.\footnote{However, some cases of the analogous assertions for other spectrally flowed reduction functors may be found in the interesting work \cite{acf}. }

The basic added difficulty for the conjecture on plus reduction can be understood immediately from its statement. Namely, the absence of cohomological shifts in the conjecture for the minus reduction underlies the fact that the functor is $t$-exact on all of the Category $\mathscr{O}$ of $\gk$-modules. This was known for $\fg = \fsl_2$ by the work of Feigin--Frenkel \cite{ff902}, and was proven in general by Arakawa \cite{ara04}. This $t$-exactness plays a crucial role in the argument of Arakawa for the minus reduction, and that of the author and Raskin. An analogous $t$-exactness holds for the plus reduction applied to certain blocks of Category $\OO$, which were exactly the cases settled by Arakawa. Beyond these cases, the plus reduction is not $t$-exact, so one needs a further idea to proceed.    

\subsection{} In this paper, we prove Frenkel--Kac--Wakimoto's conjecture for the plus reduction, as well as a natural generalization to arbitrary dominant weights and spectrally flowed reduction functors.

\section{Statement of Results}
\label{s:sres}
 
In this section, after introducing the necessary notation and preliminaries, we state our main Theorem \ref{t:maintheorem}.

\subsection{} \label{ss:lambda}Fix an arbitrary level $\kappa$, and let $L_\lambda$ be an irreducible highest weight $\gk$-module which is dominant. That is, we suppose its Verma cover $M_\lambda$ is projective in the Category $\OO$. In particular $L_\lambda$ could be any admissible module. 

 \subsection{}\label{ss:mu} \label{s:minusred}Fix in addition an element $\check{\mu}$ of the coweight lattice of the finite Cartan. One may associate to it the spectrally flowed Drinfeld--Sokolov reduction functor 
\begin{equation} \label{e:cmured}
     \Psi^{\check{\mu}} : \gk\mod \rightarrow \sW\mod,  
\end{equation}
cf. Section \ref{s:sfint} for the definition, which is slightly nonstandard if $\check{\mu}$ is not antidominant. For now, it  suffices to say that if we write $\check{\rho}$ for one half of the sum of the positive finite coroots,  then the functor $\Psi^{\crho}$ equals the plus reduction, and the functor $\Psi^0$ equals the minus reduction.

\subsection{} Our main theorem calculates the spectrally flowed reductions
\[
 \Psi^{\check{\mu}} (L_\lambda) \in \sW\mod. 
\]
If we write $G$ for the simple algebraic group of adjoint type with Lie algebra $\fg$, and $LG$ for its algebraic loop group, then as we explain in Section \ref{s:sfint} this calculation is equivalent to computing, for all $g \in LG$, the plus reduction of the $\on{Ad}_g$-conjugated simple modules 
\[
  \Psi^+\big(\hspace{-.5mm}\on{Ad}_g(L_\lambda)\big). 
\]
The answer involves the following Coxeter-theoretic combinatorics. 

Let us denote the finite Weyl group, the extended affine Weyl group, and the integral Weyl group of $L_\lambda$ by $W_f, W,$ and $W_\lambda,$ respectively. In particular, we have the tautological inclusions
\[
   W_f \subset W \supset W_\lambda. 
\]
%

Write $e^{\check{\mu}}$ for the element of $W$ corresponding to $\check{\mu}$. Let us say that the pair $(\check{\mu}, \lambda)$ is {\em good} if the double coset 
\begin{equation} \label{e:doubs}
    W_f e^{\check{\mu}} W_\lambda
\end{equation}
is a $W_f \times W_\lambda$ torsor, i.e. the action is simply transitive. In this case one has a unique factorization 
\begin{equation} \label{e:cannonfact}
 e^{\check{\mu}} = w_f w_- w_\lambda,
\end{equation}
where $w_f \in W_f$, $w_-$ is the unique minimal length element of \eqref{e:doubs} with respect to the Bruhat order on $W$, and $w_\lambda \in W_\lambda$. We write $\ell_\lambda(w_\lambda)$ for the length of $w_\lambda$ viewed as an element of the Coxeter system $W_\lambda$.

\subsection{} Finally, we need some notation. Write $\ft^*$ for the dual of the finite Cartan. Recall that the combinatorics of highest weight $\gk$-modules is governed by the level $\kappa$ dot action of $W$ on $\ft^*$, which we denote by 
\[
     W \times \ft^* \rightarrow \ft^*, \quad \quad (w, \lambda) \mapsto w\lambda,
\]
cf. Section \ref{sss:dotaction} for further discussion of this action. To discuss highest weights for $\sW$-modules, we denote the Harish--Chandra projection by
\[
 \pi: \ft^* \rightarrow \ft^* /\hspace{-1.1mm}/ W_f,
\]
and for an infinitesimal character 
$
   \chi \in \ft^* /\hspace{-1.1mm}/ W_f
$
we write $L_\chi$ for the corresponding simple highest weight $\sW$-module, cf. Section \ref{ss:hwminus} for our precise normalization.

\subsection{} With these preparations, we can state our main result.

\begin{theo}\label{t:maintheorem} Let $\check{\mu}$ and $\lambda$ be as in Sections \ref{ss:mu} and \ref{ss:lambda}, respectively. If the pair $(\check{\mu}, \lambda)$ is good, let $w_-$ and $w_\lambda$ be as in Equation \eqref{e:cannonfact}. Then the corresponding spectrally flowed reduction is given by 
\[
\Psi^{\check{\mu}}(L_\lambda) \simeq \begin{cases} L_{ \pi(w_- \lambda)}[\ell_\lambda(w_\lambda)] & \text{if $(\check{\mu}, \lambda)$ is good}, \\ 0 & \text{otherwise.} \end{cases} 
\]
\end{theo}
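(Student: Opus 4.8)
The plan is to reduce everything to a known computation for the minus reduction (the case $\check\mu = 0$, proven by Arakawa and by Dhillon--Raskin) by means of the relation between spectrally flowed reductions and $\mathrm{Ad}_g$-conjugation stated in Section \ref{s:sfint}. First I would use that relation: since $\Psi^{\check\mu}(L_\lambda) \simeq \Psi^+(\mathrm{Ad}_{e^{\check\mu}}(L_\lambda))$ up to a cohomological shift bookkeeping, and since $\mathrm{Ad}_g$ for $g$ in the loop group acts on the set of simple highest weight modules through the affine Weyl group combinatorics, the problem becomes: understand $\Psi^+$ (equivalently $\Psi^-$ after absorbing a translation by $e^{w_-^{-1} \cdot \text{something}}$) applied to the module obtained by transporting $L_\lambda$ along $e^{\check\mu}$. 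The key structural input is the factorization $e^{\check\mu} = w_f w_- w_\lambda$ of \eqref{e:cannonfact}: the $w_f \in W_f$ part is invisible to the reduction because Drinfeld--Sokolov reduction only depends on the $W_f$-orbit (it is built from a $\mathfrak{n}$-type cohomology that kills the finite Weyl group direction — this is precisely why highest weights for $\sW$-modules live in $\ft^*/\!/W_f$), the $w_\lambda \in W_\lambda$ part contributes an integral-block Weyl group element whose length $\ell_\lambda(w_\lambda)$ should be exactly the cohomological shift, and the minimal-length double coset representative $w_-$ is what actually moves $\lambda$ to the new highest weight $w_-\lambda$.

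The main steps, in order. (1) Recall from Section \ref{s:sfint} the precise identification $\Psi^{\check\mu}(-) \simeq \Psi^+(\mathrm{Ad}_{e^{\check\mu}}(-))$, or more precisely its unwinding in terms of convolution on the affine flag variety / the Iwahori--Whittaker model, so that conjugation by $e^{\check\mu}$ becomes convolution with a standard or costandard object indexed by $e^{\check\mu}$. (2) Decompose this convolution according to $e^{\check\mu} = w_f w_- w_\lambda$ and the length additivity that the ``good'' hypothesis guarantees (simple transitivity of $W_f \times W_\lambda$ on the double coset is exactly the condition ensuring $\ell(w_f w_- w_\lambda) = \ell(w_f) + \ell(w_-) + \ell_\lambda(w_\lambda)$ in the appropriate sense, so the convolution factors without cancellation). (3) Handle the three pieces: the $w_f$-convolution is absorbed because $\Psi^+$ factors through the $W_f$-invariants (Whittaker averaging along the finite $\mathfrak{n}$ kills it with no shift); the $w_\lambda$-convolution, being within the integral block of $L_\lambda$ where the relevant reduction functor is $t$-exact (the phenomenon Arakawa exploited), contributes an honest module but with a cohomological shift equal to $\ell_\lambda(w_\lambda)$, coming from the difference between the standard and the simple object, i.e. from the structure of the integral block; and the $w_-$-convolution transports $L_\lambda$ to (a module with highest weight) $w_-\lambda$ while, crucially, landing in a block where $\Psi^+ = \Psi^-$-type $t$-exactness applies and the output is the simple module $L_{\pi(w_-\lambda)}$, or else zero when the transported weight is not of the required (anti)dominant type. (4) Assemble: $\Psi^{\check\mu}(L_\lambda) \simeq L_{\pi(w_-\lambda)}[\ell_\lambda(w_\lambda)]$ when good, $0$ otherwise, the dichotomy itself being read off from whether $w_-\lambda$ meets the non-vanishing criterion for $\Psi^-$ (antidominance of the finite part) — and this is where the ``good''/``not good'' split is seen to match the vanishing/non-vanishing of the reduction.

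The hard part will be step (3), and within it the identification of the cohomological shift with $\ell_\lambda(w_\lambda)$ and the proof that the $w_-$-piece yields the simple module rather than some other subquotient. The difficulty is exactly the one flagged in the introduction: outside the special blocks, $\Psi^+$ is not $t$-exact, so one cannot simply track a module through a $t$-exact functor. The ``further idea'' needed here is to exploit the semi-infinite / spectral-flow structure to reduce the non-$t$-exact $\Psi^+$ to a composite of a $t$-exact reduction (where Arakawa's analysis of integral blocks and the Dhillon--Raskin Whittaker methods apply verbatim) with a well-controlled spectral-flow twist whose only effect is the clean shift $[\ell_\lambda(w_\lambda)]$; establishing that the twist is this clean, with no spurious higher cohomology and no change in the socle/simplicity of the output, is the crux. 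A secondary subtlety is verifying that the ``good'' torsor condition is genuinely equivalent to non-vanishing — one direction (good $\Rightarrow$ the stated formula, which is nonzero) comes from the construction, but the converse, that failure of simple transitivity forces the reduction to vanish, will require showing that a nontrivial stabilizer in $W_f \times W_\lambda$ produces, after reduction, a module whose character is forced to be zero by an Euler-characteristic or BGG-resolution cancellation argument, in the spirit of the original Frenkel--Kac--Wakimoto sign analysis.
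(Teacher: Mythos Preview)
Your overall architecture matches the paper's: the reduction $\Psi^{\check\mu}$ is written as convolution with a costandard Whittaker sheaf, which is then factored according to $e^{\check\mu} = w_f w_- w_\lambda$ into a product of intertwining functors, and one tracks $L_\lambda$ through each step, finishing with Arakawa's theorem for $\Psi^-$. So the skeleton is correct.

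However, your step (3) misidentifies the mechanisms at the two crucial points, and these are exactly the places where the argument would stall as written. First, the intertwining functor $j_{w_\lambda,*}^{\chi,\chi}$ is \emph{not} $t$-exact on the block; it has amplitude $[-\ell_\lambda(w_\lambda),0]$ in general. The reason it acts on $L_\lambda$ as a pure shift $[\ell_\lambda(w_\lambda)]$ has nothing to do with ``the difference between the standard and the simple object'' or block-level $t$-exactness. The paper's mechanism is that a \emph{dominant} simple $L_\lambda$ is $(P,\chi)$-equivariant for every standard parahoric $P$ attached to a simple reflection of $W_\lambda$ (this is what ``maximally integrable'' means), and on such objects a rank-one calculation shows $j_{s,*}^{\chi,\chi}$ acts as $[1]$; one then decomposes $j_{w_\lambda,*}^{\chi,\chi}$ into a convolution of such simple-reflection pieces (which requires a separate argument, since simple reflections of $W_\lambda$ are typically not simple in $W$). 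Second, the reason the $w_-$ step sends the simple $L_\lambda$ to the simple $L_{w_-\lambda}$ is that $j_{w_-,*}^{w_-\chi,\chi}$ is \emph{clean} (its $!$- and $*$-extensions coincide), a consequence of $w_-$ being minimal in its $W_f \times W_\lambda$ double coset; cleanness makes this intertwining functor a $t$-exact equivalence interchanging simples. Your phrase ``landing in a block where $\Psi^+ = \Psi^-$-type $t$-exactness applies'' is not the right picture: $\Psi^-$ is $t$-exact on every block, and it is the cleanness of the $w_-$ step, not a property of the target block, that does the work.

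Finally, your proposed handling of the vanishing case via an Euler-characteristic or BGG cancellation is unnecessary and would be hard to make rigorous. In the paper the factorization still applies when $(\check\mu,\lambda)$ is not good (with $w_\lambda$ taken to be the minimal-length element in its coset), and the vanishing then falls out directly from Arakawa's criterion: $\Psi^-(L_{w_-\lambda}) = 0$ exactly when $w_-\lambda$ is not finite antidominant, which is equivalent to the failure of the torsor condition.
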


We would like to make two comments about the theorem.

\begin{re} Let us take $\check{\mu} = \check{\rho}$, so that we are considering the plus reduction, and let us further suppose that $\lambda$ is a principal admissible weight in the sense of Kac--Wakimoto \cite{kw89}. This case of Theorem \ref{t:maintheorem} confirms Conjecture $\on{3.4}_+$ of Frenkel--Kac--Wakimoto \cite{fkw}. 

More precisely, (i) whether the reduction vanishes is exactly as they conjectured. Further, (ii) when the reduction is nonvanishing we get the irreducible $\sW$-module that they conjectured concentrated in a single cohomological degree. Finally, (iii) the exact cohomological degree corrects their conjecture. Namely, we find a shift by the length of $w_\lambda$ rather than minus its semi-infinite length; these differ in examples already for $\fg = \fsl_3$. Note however that this does not affect the parity of the cohomological degree, and in particular is consistent with their Euler characteristic calculations. 
\end{re}

\begin{re} After stating Conjecture $\on{3.4}_+$, Frenkel--Kac--Wakimoto outline in Conjectures 3.5.1 and 3.5.2 a conjecture on the cohomology of the plus reduction of a 2-sided resolution of $L_\lambda$ by Wakimoto modules. Their Conjecture 3.5.1 on the existence of two sided resolutions was settled by Arakawa \cite{arabgg}, and Conjecture 3.5.2 then follows by Theorem \ref{t:maintheorem}. 
\end{re}

\subsection{} The method by which we prove Theorem \ref{t:maintheorem}, which we will indicate in the next subsection, also yields tight bounds on the cohomological amplitude of any $\Psi^{\check{\mu}}$ when restricted to a block of Category $\OO$. The right exactness of the functors follows from work of Frenkel--Gaitsgory \cite{fg06loc}. Our lower bounds on the nonvanishing cohomology are to the best of our knowledge new save for the case of the minus reduction, where they reduce to its $t$-exactness. 

For example, for the block containing $L_\lambda$ as above, we show the functor has cohomological amplitude at most $[-\ell_\lambda(w_\lambda), 0]$. That is, the reduction of $L_\lambda$, when nonzero, is concentrated in the most negative possible degree. The case of a general block is similar and is presented in Theorem \ref{t:cohamp} below.

\subsection{} Finally, let us describe the idea of the proof. Our starting point is the fact that the various reductions $\Psi^{\check{\mu}}$, when restricted to Category $\OO$,  differ from the minus reduction $\Psi^-$ by precomposition with intertwining, i.e. twisting, functors.\footnote{The twisting functors were constructed geometrically by Feigin--Frenkel in their fundamental work \cite{ffsif}, and  algebraically by Arkhipov in \cite{ark}. They also appear in the earlier work of Beilinson--Bernstein \cite{bb82} under the name intertwining functors.} Equivalently, writing $\mathring{I}$ for the prounipotent radical of $I$, they differ by convolution with costandard objects in the universal affine Hecke category 
\begin{equation} \label{e:biun}
   D_\kappa(\mathring{I} \backslash LG / \mathring{I}).
\end{equation}
Here universal refers to the fact that we impose no restrictions on the monodromies of the D-modules along the orbits of the finite Cartan $I/\mathring{I}$ under left and right multiplication. This relation with intertwining functors is an old observation going back at least to the work of Frenkel--Gaitsgory \cite{fg06loc} (and, after applying the sheaves-functions dictionary, far earlier still to the study of principal series representations of finite and $p$-adic Chevalley groups). 

In our approach to the conjecture we make essential use of the fact that, upon restriction to a fixed block $\OO_\chi$ of Category $\OO$, the relevant intertwining functors descend to costandard objects in an affine Hecke category with right monodromy fixed by $\chi$. As such, this carries an action by the bi $\chi$-monodromic affine Hecke category 
\begin{equation} \label{e:oneun} D_\kappa(\mathring{I} \backslash LG / I, \chi) \circlearrowleft D_\kappa(I, \chi \backslash LG / I, \chi).\end{equation}
This action categorifies the right action of the integral Weyl group on the extended affine Weyl group 
$$W \circlearrowleft W_\chi.$$
Practically, this means that the way in which costandard objects in \eqref{e:oneun} convolve with one another depends finely upon $\chi$. The upshot for us is that these convolutions satisfy identities which do not descend from the universal case \eqref{e:biun}.  

To prove Theorem \ref{t:maintheorem} we lift the factorization
\[
    e^{\check{\mu}} = w_f \circ w_- \circ w_\chi
\]
of Equation \eqref{e:cannonfact} to such an identity of costandard objects in \eqref{e:oneun}. That is, we write in Section \ref{s:sfint} a spectrally flowed reduction functor as the minus reduction precomposed with two intertwining functors:
\begin{equation} \label{e:3stepplan}
    \Psi^{\check{\mu}} \simeq \Psi^- \circ j_{w_-, *}^{w_- \chi, \chi} \circ j_{w_\chi, *}^{\chi, \chi}.\footnote{The precise definitions of the appearing intertwining functors, which are not necessary to follow this introductory discussion, nonetheless may be found in Section \ref{ss:twaffheck}. }
\end{equation}
We then track in Section \ref{s:fkw} how each step in the factorization transforms dominant highest weight modules in $\OO_\chi$ and find that it  produces the behavior anticipated by Frenkel--Kac--Wakimoto. Namely, the functors $\Psi^-$ and $j_{w_-, *}^{w_- \chi, \chi}$ of Equation \eqref{e:3stepplan} are $t$-exact and behave simply on the  abelian categories of modules; for $\Psi^-$ this is exactly Arakawa's theorem. However, the first term $j_{w_\chi, *}^{\chi, \chi}$ is typically far from $t$-exact, and it is in general difficult to calculate its action on a given object. However, on `maximally integrable' objects, such as dominant highest weight modules, it reduces to a cohomological shift of the identity functor, which yields the theorem. 

\begin{re} The cases of the plus reduction previously obtained by Arakawa \cite{ara07} are exactly the cases when $w_\chi$ is length zero. The latter condition is equivalent to $j_{w_\chi, *}^{\chi, \chi}$ being $t$-exact, and for this reason the arguments for the minus reduction, where one checks that (co)standard objects are sent to (co)standard objects, and deduce an assertion for arbitrary simples, carry over {\em mutatis mutandis}. \end{re}

In short, {we find the combinatorics of the Frenkel--Kac--Wakimoto conjecture for the plus reduction in the combinatorics of twisted affine Hecke categories}.

\subsection{} The paper is organized as follows. We gather in Section 3 some preliminary material on loop groups, W-algebras, categorical representation theory, Whittaker sheaves, and Hecke categories. 

In Section \ref{s:sfint} we relate the spectrally flowed reduction functors to Whittaker sheaves on the affine flag manifold and obtain the factorization of Equation \eqref{e:3stepplan}. 

Finally, in Section \ref{s:fkw} we apply this to obtain the main Theorem \ref{t:maintheorem}.

\vspace{.15cm}

{\noindent\bf Acknowledgments.} We warmly thank Sam Raskin for conversations and collaborations related to $\sW$-algebras, and in particular for introducing us to the conjecture addressed in this paper several years ago. We additionally thank him and Ivan Losev for comments on a draft of this paper, and Zhiwei Yun for some helpful conversations.

We thank Tomoyuki Arakawa, Thomas Creutzig, Edward Frenkel, and Geordie Williamson for correspondence related to the present work. 

The author was supported by an NSF Postdoctoral Fellowship under grant No. 2103387.

\section{Notation and preliminaries}

In this section we gather some known results and set notation. Aside from our convention that our fixed Borel $B$ is $TN^-$, and the opposite unipotent subgroup is $N$, our choices are more or less standard. So, the reader may wish to skip directly to Section \ref{s:sfint} and refer back only as needed.

\subsection{Loop groups and algebras} Some references for the following material are \cite{fbz}, \cite{frenkelloop}.

\subsubsection{} We work throughout over an algebraically closed field $k$ of characteristic zero. In particular, ind-schemes and their morphisms discussed below are over the spectrum of $k$. For an ind-scheme $X$ we denote by $X(k)$ its set of $k$-points. 

\subsubsection{} Let $G$ be a simple algebraic group of adjoint type.\footnote{The results of the paper for the W-algebra associated to a general connected reductive group follow immediately from this case.} We fix a Cartan subgroup $T$ and Borel subgroup $B = TN^-$, and write $N$ for the opposite maximal unipotent subgroup. We denote the respective Lie algebras by 
$
   \fg, \ft, \fb, \fn^-, \text{ and } \fn. 
$
Finally, we write $N_G(T)$ for the normalizer of $T$ in $G$, and denote the finite Weyl group by$$W_f :=  N_G(T) / T.$$
Recall this is a Coxeter system, i.e. a Coxeter group with a distinguished collection of simple reflections, which in this case is fixed by $B$. For an element $w \in W_f$, by a coset representative $\dot{w}$ we mean any lift to $N_G(T)$. 

\subsubsection{} Let us turn to the loop group. For an affine scheme $X$, we denote by $L^+X$ its formal arc space, which is a scheme of infinite type, and we denote by $LX$ its formal loop space, which is an ind-scheme of infinite type. Recall that there is a canonical closed embedding $L^+X \rightarrow LX$. As the formation of arc and loop spaces is functorial and commutes with finite products, they send affine group schemes to group ind-schemes. 

Within the loop group $LG$, our choice of $B \subset G$ distinguishes an Iwahori subgroup $I \subset LG$. Namely, $I$ is the preimage of $B$ under the evaluation map  
$
 L^+G \rightarrow G. 
$

\subsubsection{} We write $N_{LG}(LT)$ for the normalizer of $LT$ in $LG$, and denote the extended affine Weyl group by 
\[
    W := N_{LG}(LT)/L^+T. 
\]
For an element $w \in W$, by a coset representative $\dot{w}$ we mean any lift to $N_{LG}(LT)$. If we denote the cocharacter lattice of $T$, i.e. the coweight lattice, by $\check{\Lambda}$, we have a canonical isomorphism
\begin{equation} \label{e:affineweyl}
W \simeq W_f \ltimes \check{\Lambda}.
\end{equation}
Since there are different sign conventions for this isomorphism in the literature, let us spell this out carefully. For a cocharacter $\check{\mu}: \mathbb{G}_m \rightarrow T$, its restriction to the formal punctured disk about $0 \in \mathbb{A}^1(k)$ yields an element $t^{\check{\mu}} \in LT(k)$. 

With this, for an element $w \in W_f$, equipped with any lift $\dot{w}$ to $N_G(T)$, we identify the $L^+T$ coset of $\dot{w} t^{\check{\mu}}$ on the left-hand side of \eqref{e:affineweyl} with the element $we^{\check{\mu}}$ on the right-hand side. 

Let us also recall that if we write $\check{Q}$ for the coroot lattice, then the subgroup $W_f \ltimes \check{Q}$ is canonically a Coxeter system, with its simple reflections fixed by our Iwahori subgroup $I$. Its length function and Bruhat order naturally extend to all of $W$, and we denote these by $\ell$ and $<$ respectively.

\subsubsection{} Let us now discuss the loop algebras. Note that if $H$ is an affine algebraic group with Lie algebra $\fh$, then the topological Lie algebras of $L^+H$ and $LH$ are given by (the $k$-points of) $L^+ \fh$ and  $L\fh$, respectively. Explicitly, one has canonical isomorphisms 
\[
   L^+ \fh \simeq \fh \otimes k[\hspace{-.2mm}[t]\hspace{-.2mm}] \quad \text{and} \quad L \fh \simeq \fh \otimes k(\!(t)\!), 
\]
with the pointwise Lie bracket.

\subsubsection{} For a level, i.e. Ad-invariant bilinear form $\kappa$ on $\fg$, we denote the associated central extension of its loop algebra by 
\[
0 \rightarrow k \cdot \mathbf{1} \rightarrow \gk \rightarrow L \fg \rightarrow 0,
\]
and the associated vacuum vertex algebra by 
\[
 \mathbb{V}_\kappa \simeq \on{ind}_{L^+\fg \oplus k \cdot \mathbf{1}}^{\gk} k. 
\]
We would like to next review the definition of the associated affine W-algebra. 

\subsection{Semi-infinite cohomology and the $\sW$-algebra} The definition of the affine W-algebra is due to Feigin--Frenkel \cite{ff90}. Some references for the following material are \cite{vor}, \cite{fbz}, and \cite{araintro17}.

\subsubsection{} \label{sss:reldet}In this paper, we only need the following simple case of semi-infinite cohomology. Suppose $U$ is a unipotent algebraic group with Lie algebra $\fu$. For a smooth $L\fu$-module $M$ and a compact open subalgebra $\fk$ of $L\fu$, one may form the semi-infinite cohomology
\[
     C^{\frac{\infty}{2} + *}(L\fu, \fk, M) \in \on{Vect},
\]
where $\on{Vect}$ denotes the dg-category of unbounded complexes of $k$-vector spaces.\footnote{To set up some of the formalism we use and to construct some of the appearing categories, one has to work systematically with dg-categories. For this reason, we work with the canonical dg-enhancement of the derived category of vector spaces. However, for the purposes of this paper, one only really needs the existence of the associated triangulated categories throughout, so the reader may safely ignore such points.} 

Recall that $\fk$ appears to trivialize the dimension torsor for compact open subalgebras. Plainly, for another compact open subalgebra $\tilde{\fk}$ one has a canonical isomorphism 
\[
     C^{\frac{\infty}{2} + *}(L\fu, \tilde{\fk}, M) \otimes \on{rel. det}( \tilde{\fk},\fk)^* \simeq    C^{\frac{\infty}{2} + *}(L\fu, \fk, M),
\]
where in the appearing relative determinant line both $\fk$ and $\tilde{\fk}$ lie in cohomological degree -1.

\subsubsection{} Fix a generic additive character $\psi^-: N^- \rightarrow \mathbb{G}_a$. Associated to our coordinate $t$ is an induced additive character
\begin{equation} \label{e:res}
LN^- \xrightarrow{L\psi^-} L\mathbb{G}_a \xrightarrow{\on{Res}(-)} \mathbb{G}_a,
\end{equation} 
where, at the level of $k$-points, $\on{Res}(-)$ sends a formal power series $$f(t) \in k(\!(t)\!) \simeq L\mathbb{G}_a(k)$$to its coefficient of $t^{-1}$. We again use $\psi^-$ to denote the composition \eqref{e:res}, and we write $k_{\psi^-}$ for the induced one dimensional representations of $\fn^-$ and $L\fn^-$. With this, by definition the underlying vector space of $\sW$ is the complex
\[
     C^{ \frac{\infty}{2} + *}(L\fn^-, L^+\fn^-, \mathbb{V}_\kappa \otimes k_{\psi^-}),
\]
which is concentrated cohomologically in degree zero.

\subsection{Categories of smooth modules} 
\label{ss:gmodwmod}
\subsubsection{} For a dg-category $\sC$ equipped with a $t$-structure we denote by $\sC^+$ the full subcategory of bounded below objects. Similarly, for an integer $n$ we denote by $\sC^{\leqslant n}$ and $\sC^{\geqslant n}$  the full subcategories of objects concentrated in degrees at most $n$ and at least $n$, respectively. We denote the heart of the $t$-structure, i.e. the abelian category of objects in cohomological degree zero,  by $\sC^\heartsuit$.

\subsubsection{} For the affine Lie algebra, consider its abelian category $\gk\mod^\heartsuit$ of smooth modules on which $\mathbf{1}$ acts via the identity. We denote by $\gk\mod$ its associated renormalized derived category, as introduced by Frenkel--Gaitsgory \cite{dmod-aff-flag}. We recall that this carries a $t$-structure and a canonical $t$-exact functor to the naive unbounded derived category
\[
    \gk\mod \rightarrow \on{D}(\hspace{.5mm}\gk\mod^\heartsuit),
\]
which is an equivalence on bounded below categories. So, the only difference between the two lies in cohomological degree $-\infty$.

\subsubsection{} \label{sss:wmod}For the affine W-algebra, consider its abelian category $\sW\mod^\heartsuit$ of smooth modules. We denote by $\sW\mod$ its renormalized derived category, as introduced by Raskin \cite{whit}. Again, one has a $t$-exact functor
\[
  \sW\mod \rightarrow \on{D}(\sW\mod^\heartsuit)
\]
which induces an equivalence of the categories of bounded below objects.

\subsection{The plus and minus reductions} The plus reduction below was introduced by Feigin--Frenkel in  \cite{ff90}. The minus reduction was studied for $\fsl_2$ by Feigin--Frenkel \cite{ff902}, and was introduced in general by Frenkel--Kac--Wakimoto \cite{fkw}.

\subsubsection{} From the definition of the affine W-algebra, semi-infinite cohomology defines a functor 
\begin{equation} \label{e:preduc}
C^{\frac{\infty}{2} + *}\big(L\fn^-, L^+\fn^-, (-) \otimes k_\psi^-\hspace{.5mm}\big): \gk\mod \rightarrow \sW\mod. 
\end{equation}
On bounded below objects, the composition 
\[
  \gk\mod^+ \rightarrow \gk\mod \rightarrow \sW\mod \rightarrow \on{D}(\sW\mod^\heartsuit)
\]
agrees with standard construction via Feigin's semi-infinite cochain complex \cite{feigin84}. Following Frenkel--Kac--Wakimoto, we call \eqref{e:preduc} the {plus reduction}. We denote it by $\Psi^+$.

\subsubsection{} \label{sss:defminusred}Fix in addition a generic additive character $\psi$ of the opposite unipotent subalgebra $N$. Associated to our coordinate $t$ is an additive character 
\[
   LN \xrightarrow{L \psi} L \mathbb{G}_a \xrightarrow{\on{Res}(t \cdot -)} \mathbb{G}_a,
\]
where, at the level of $k$-points, $\on{Res}(t \cdot -)$ sends a formal power series to its coefficient of $1$. We denote by $k_\psi$ the associated one dimensional representation of $L\fn$. 

Let us write $L^{(1)}\fn$ for the first congruence subalgebra of $L^+\fn$, so that 
\[
 L^{(1)}\fn \simeq \fn \otimes t \cdot k[\hspace{-.2mm}[t]\hspace{-.2mm}]. 
\]
Consider the associated semi-infinite cohomology.
\begin{equation} \label{e:minusred}
C^{ \frac{\infty}{2} + *}\big(L\fn, L^{(1)}\fn, (-) \otimes k_\psi\big): \gk\mod \rightarrow \on{Vect}. 
\end{equation}

\subsubsection{} The functor \eqref{e:minusred} lifts to a functor to $\sW\mod$ as follows. Recalling that $G$ is adjoint, it follows there is a unique coset representative $\dot{w}_\circ$ for the longest element $w_\circ$ of $W_f$ satisfying
\[
  \on{Ad}_{\dot{w}_\circ}(\psi) = \psi^-, 
\]
i.e. that exchanges the additive characters of $N$ and $N^-$ under conjugation. 

  Write $\crho$ for one half of the sum of the positive coroots of $G$. Passing to the loop groups, it follows that the element $x := \dot{w}_\circ t^{\crho}$ satisfies 
  \[
     \on{Ad}_{x}(\psi) = \psi^-,
  \]
  i.e. it exchanges the additive characters of $LN$ and $LN^-$ under conjugation. 
  
 Using this and the canonical isomorphism 
 \begin{align*}
 C^{\frac{\infty}{2} + *}\big(L\fn, L^{(1)}\fn, (-) \otimes k_\psi\big) & \simeq C^{\frac{\infty}{2} + *}\big(L\fn^-, \on{Ad}_{x} L^{(1)} \fn, \on{Ad}_{x}(-) \otimes k_{\psi^-}\hspace{.5mm}\big)
 \\ & \simeq C^{\frac{\infty}{2} + *}\big(L\fn^-, L^+ \fn^-, \on{Ad}_{x}(-) \otimes k_{\psi^-}\hspace{.5mm}\big) \otimes \ell,
 \intertext{where $\ell$ is the cohomologically graded line} 
   \ell & \simeq \on{rel.det}(\on{Ad}_{x} L^{(1)}\fn, L^+\fn^-), 
 \end{align*}
 we thereby obtain a functor 
 \[
 C^{\frac{\infty}{2} + *}\big(L\fn, L^{(1)}\fn, (-) \otimes k_\psi\big): \gk\mod \rightarrow \sW\mod.
 \]
 Following Frenkel--Kac--Wakimoto, we call this the minus reduction. We  denote it by $\Psi^-$. 

\subsection{Highest weight modules and the minus reduction}

\subsubsection{} Recall that we fixed a Borel subgroup $B$ of $G$, and that we write $I$ for the associated Iwahori subgroup of $LG$. For an element $\lambda \in \ft^*$, we denote the corresponding one dimensional representation of $\ft$ by $k_\lambda$ and the corresponding Verma module of $\gk$ by
\[
      M_\lambda := \on{pind}_{\ft}^{\gk} k_\lambda. 
\]
We denote the simple quotient of $M_\lambda$ by $L_\lambda$. 

\subsubsection{} \label{sss:dotaction}    Recall that the combinatorics of such highest modules are controlled by the level $\kappa$ dot action of $W$ on $\ft^*$, which we denote by 
\[
    W \times \ft^* \rightarrow \ft^*, \quad \quad (w, \mu) \mapsto w\mu. 
\]
Explicitly, $W_f$ acts by the usual dot action of the finite Weyl group, and a coweight $\check{\mu} \in \check{\Lambda}$ acts by translation by 
   \[
      (\kappa_c - \kappa)(\check{\mu}) \in \ft^*, 
   \]
where $\kappa_c$ denotes the critical level. In this formula, we have used the restriction of $(\kappa_c - \kappa)$ from $\fg$ to $\ft$, and the induced map 
\[
       (\kappa_c - \kappa): \ft \rightarrow \ft^*. 
\]

\subsubsection{} \label{sss:intwylgroup}
We also recall that to any $\lambda \in \ft^*$ one associates its integral Weyl group $W_\lambda$. To define this, write $\check{\Phi}^+$ for the  positive real affine coroots, i.e. the positive real coroots of $\gk$. For $\halpha \in \check{\Phi}^+$, consider the dual real root $\alpha$ of $\gk$, and write $\alpha_c$ for its classical part, i.e. associated root of $\fg$. There is unique affine linear functional on $\ft^*$, which we denote again by $\halpha$, such that the corresponding reflection $s_{\halpha}$ in $W$ acts through the level $\kappa$ dot action by  
\[
    s_{\halpha}\mu = \mu - \langle \mu, \halpha \rangle \alpha_c. 
\]

With this, we may associate to $\lambda$ its positive integral coroots 
\[
    \check{\Phi}_\lambda^+ := \{ \halpha \in \check{\Phi}^+: \langle \halpha, \lambda \rangle \in \mathbb{Z} \}.
\]
By definition,  $W_\lambda$ is the subgroup of $W$  generated by the reflections $s_{\halpha},$ for $\halpha \in \check{\Phi}^+_\lambda$. Moreover, if we call an element $\halpha \in \check{\Phi}^+_\lambda$ indecomposable if it cannot be written as a sum of two other elements of $\check{\Phi}^+_\lambda$, then the reflections in $W_\lambda$ associated to the indecomposable integral coroots exhibit $W_\lambda$ as a Coxeter system. Note that any simple reflection of $W$ contained in $W_\lambda$ is necessarily simple in $W_\lambda$, but a  simple reflection of $W_\lambda$ need not be simple in $W$.  

\subsubsection{} \label{ss:hwminus} We next review the associated categories of highest weight modules. Let us write $\Lambda$ for the the character lattice of $T$, i.e. the root lattice. Recall the canonical bijection between character sheaves on $T$ and $\ft^* / \Lambda$, which associates to a character sheaf the monodromy of the associated local system. 

For a point $\chi \in \ft^* / \Lambda$, we denote by the same letter the associated character sheaf on $I$, and by 
\[
   \gk\mod^{I, \chi}
\]
the category of $(I, \chi)$-equivariant $\gk$-modules. Plainly, this is compactly generated the Verma modules 
\[
     M_\lambda, \quad \text{for } \lambda \in \chi + \Lambda, 
\]
and is a renormalization of the corresponding piece of (cocompleted, derived) Category $\OO$. For more discussion, the reader may see for example \cite{gn}, Section 5 of \cite{tfle}, or Section 9 of  \cite{mys}.

\subsubsection{} We would like to recall the behavior of the minus reduction on $\gk\mod^{I, \chi}$. To do so, let us write $\ft^* /\hspace{-1.1mm}/ W_f$ for the invariant theory quotient of $\ft^*$ by the dot action of $W_f$. We normalize its identification with the spectrum of the Zhu algebra 
\[
   \on{Spec } \on{Zhu}(\sW) \simeq \ft^* /\hspace{-1.1mm}/ W_f 
\]
as in \cite{lpw}. For an infinitesimal character $\Theta \in \ft^* /\hspace{-1.1mm}/ W_f(k)$, we denote the corresponding Verma module for $\sW$ by $M_\Theta$ and its simple quotient by $L_\Theta$.

The following fundamental theorem of Arakawa settled the Frenkel--Kac--Wakimoto conjecture for the minus reduction \cite{ara04}, \cite{ara07}, for an alternative proof cf. \cite{whit}, \cite{locw}. To state it, denote the Harish-Chandra projection by 
\[
  \pi: \ft^* \rightarrow \ft^* /\hspace{-1.1mm}/ W_f,
\]
and recall that a weight $\mu \in \ft^*$ is said to be {finite antidominant} if for every finite positive coroot $\halpha$ one has
\[
    \langle \halpha, \lambda \rangle \notin \mathbb{Z}^{> 0},
\]
where we incorporate a $\rho$-shift into the pairing as in Section \ref{sss:intwylgroup}. 
\begin{theo}\label{t:araminus} The minus reduction  
\[
  \Psi^-: \gk\mod^{I, \chi} \rightarrow \sW\mod
\]
is $t$-exact. Moreover, for any $\lambda \in \chi + \Lambda$ there are canonical isomorphisms 
$
     \Psi^-(M_\lambda) \simeq M_{\pi(\lambda)}, 
     $
and 
\[
 \Psi^-(L_\lambda) \simeq \begin{cases} L_{\pi(\lambda)} & \text{if $\lambda$ is finite antidominant}, \\ 0 & \text{otherwise.} \end{cases}
\]
That is, the minus reduction sends Verma modules to Verma modules and simple modules to simple modules or zero. 
\end{theo}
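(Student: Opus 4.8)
The plan is to prove all three assertions at once by passing to a geometric model and establishing a single $t$-exactness statement there. First I would invoke the localization picture for affine Category $\OO$: by Kashiwara--Tanisaki and Frenkel--Gaitsgory, the block $\gk\mod^{I,\chi}$ is governed by a suitable category of $\kappa$-twisted D-modules on the affine flag variety $\Flg = LG/I$, under which the Verma $M_\lambda$---with $\lambda \in \chi + \Lambda$ matched to $w \in W$ via the level $\kappa$ dot action---goes to the standard object $\Delta_w = (j_w)_{!}$ on the Schubert cell through $\dot w$, the dual Verma to the costandard $\nabla_w = (j_w)_{*}$, and the simple $L_\lambda$ to the intersection cohomology sheaf $\mathrm{IC}_w$. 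On the target I would use Raskin's realization of $\sW\mod$ as the Iwahori--Whittaker category of D-modules on $\Flg$, under which Verma and simple $\sW$-modules go to Whittaker standard and Whittaker IC sheaves. The content of these two dictionaries, for our purpose, is that $\Psi^-$ is intertwined with the Whittaker averaging functor $\on{Av}^{\psi}_{*}$ from $(I,\chi)$-monodromic to $(LN,\psi)$-equivariant D-modules on $\Flg$; the Harish--Chandra projection $\pi$ enters because this averaging collapses the residual $W_f$-symmetry on the parametrizing data.

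The heart of the matter is the $t$-exactness of $\on{Av}^{\psi}_{*}$. Right $t$-exactness is the geometric shadow of the right exactness of $\Psi^-$ due to Frenkel--Gaitsgory. For left $t$-exactness I would compute $\on{Av}^{\psi}_{*}(\Delta_w)$ and $\on{Av}^{\psi}_{*}(\nabla_w)$ stratum by stratum: averaging along $LN$ against the nondegenerate character $\psi$ expresses the contribution of each $I$-orbit as the cohomology of its intersection with an $(LN,\psi)$-orbit, twisted by $\psi$. The key geometric input is that each such intersection is either empty or an affine space, and that $\psi$ restricts to it as a character that is trivial for exactly one ``relevant'' orbit and nontrivial for the others. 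Cohomology of an affine space against a nontrivial character vanishes, so only the relevant orbit contributes, in a single cohomological degree; hence $\on{Av}^{\psi}_{*}(\Delta_w)$ and $\on{Av}^{\psi}_{*}(\nabla_w)$ lie in the heart. Since the Iwahori--Whittaker category is a highest weight category whose standard objects are the images of the $\Delta_w$, this upgrades to $t$-exactness of $\on{Av}^{\psi}_{*}$ on the whole block, giving the first clause of the theorem.

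The remaining clauses are then bookkeeping. The computation above already identifies $\on{Av}^{\psi}_{*}(\Delta_w)$ as a Whittaker standard object, and matching highest weights through the two dictionaries---with the Zhu-algebra normalization of Section \ref{ss:hwminus}---identifies it with $M_{\pi(\lambda)}$; equivalently, since $M_\lambda$ is (co)induced one may argue directly by a Shapiro-type lemma for semi-infinite cohomology, reducing the claim to the finite Whittaker (Kostant) reduction of the one-dimensional module $k_\lambda$. For the simples, $\mathrm{IC}_w$ is a successive extension of standards, so $t$-exactness places $\on{Av}^{\psi}_{*}(\mathrm{IC}_w)$ in the heart, where it is either $0$ or has a unique simple quotient. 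It vanishes exactly when the orbit through $\dot w$ is not $(LN,\psi)$-relevant, i.e.\ when $w$ is not the minimal-length representative of $W_f w$; unwinding the dot action and the $\rho$-shift of Section \ref{sss:intwylgroup}, this is precisely the failure of $\lambda$ to be finite antidominant. When the orbit is relevant, $\mathrm{IC}_w$ is clean along it, so $\on{Av}^{\psi}_{*}(\mathrm{IC}_w)$ is the corresponding simple Whittaker object, namely $L_{\pi(\lambda)}$, with no cohomological shift.

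The main obstacle is the geometric input in the second paragraph: the cohomology vanishing that simultaneously forces $\on{Av}^{\psi}_{*}$ into the heart and produces the relevant-versus-nonrelevant dichotomy for IC sheaves. This is a genuinely Whittaker phenomenon---the survival of only relevant orbits under averaging has no counterpart for ordinary equivariance---and it requires a careful analysis of the $(LN,\psi)$-stratification of $\Flg$ and of the restriction of $\psi$ to the relevant stabilizer directions; for the IC sheaves one additionally needs cleanness of relevant IC sheaves along their supports, which comes from purity (Kazhdan--Lusztig theory on $\Flg$) or a contraction argument. In Arakawa's original, purely algebraic proof the same difficulty appears as the vanishing $H^{i}(\Psi^-(L_\lambda)) = 0$ for $i \neq 0$, which he extracts via a spectral sequence built from a BGG-type resolution together with a classification of which of the resulting Euler characteristics are concentrated in a single degree.
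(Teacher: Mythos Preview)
The paper does not prove this statement. It appears in the preliminaries as a cited result, attributed to Arakawa \cite{ara04}, \cite{ara07}, with an alternative proof by the author and Raskin \cite{whit}, \cite{locw}; it is then used as a black box in the proof of the main theorem. There is therefore no in-paper argument to compare against.

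Your sketch is close in spirit to the Dhillon--Raskin alternative, but the first move contains a genuine gap. You transport the problem to twisted D-modules on $\Flg$ via Kashiwara--Tanisaki/Frenkel--Gaitsgory localization, asserting that Vermas, dual Vermas, and simples go to $\Delta_w$, $\nabla_w$, $\mathrm{IC}_w$. Localization is an equivalence only at suitable levels (e.g.\ negative or irrational); at positive rational level---precisely the regime of admissible representations---it is not, so the dictionary you invoke is unavailable in the generality of the theorem. The approach of \cite{whit}, \cite{locw} avoids this by never leaving $\gk\mod$: one uses that $\gk\mod$ is a $D_\kappa(LG)$-module, identifies $\sW\mod \simeq \gk\mod^{LN,\psi}$ via affine Skryabin, and realizes $\Psi^-$ as convolution with the costandard Whittaker kernel $j_{e,*}^{\psi,\chi}$ in $D_\kappa(LN,\psi \backslash LG / I,\chi)$ (cf.\ Example~\ref{e:pmareint}). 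The orbit-by-orbit vanishing you describe is then an analysis of how such kernels convolve with (co)standards in $\gk\mod^{I,\chi}$, not a computation on $\Flg$.

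A smaller point: your argument for simples (``either $0$ or has a unique simple quotient'') does not explain why the result is simple rather than merely having simple head. The clean mechanism is that $\Psi^-$, viewed as $\on{Av}^{\psi,r}_*$ restricted to $\gk\mod^{I,\chi}$, admits a fully faithful right adjoint and is hence a quotient functor on hearts; such functors send simples to simples or zero. The identification of which simples survive then follows from the Verma computation and the highest weight structure, without needing a separate cleanness statement for IC sheaves.
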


\subsection{Categorical actions of loop groups}

\label{ss:catacts}

\subsubsection{} We find it convenient and clarifying to use some language and tools from the categorical representation theory of loop groups in what follows. However, the reader may straightforwardly translate our arguments to the setting of \cite{fg06loc} (which, along with \cite{bdh}, laid the groundwork for this theory).

Some references for the ensuing material are as follows. For generalities on dg-categories, one may consult Section 1 of \cite{gaitsroz}. For D-modules on ind-schemes, one may look at \cite{rdm} or the user-friendly \cite{beraldo}. For the basic results in categorical representation theory, one may see  \cite{beraldo}, \cite{mys}, and \cite{whitlocglob}, as well as the notes \cite{paris-notes}.

\subsubsection{} Consider the $(\infty, 2)$-category of cocomplete, presentable $k$-linear dg-categories $\DGCat$. We refer to objects of it as dg-categories, and 1-morphisms in it, i.e. $k$-linear quasi-functors in the sense of \cite{drquo} which commute with colimits, as functors.

$\DGCat$ is a symmetric monoidal $\infty$-category with respect to the Lurie tensor product.\footnote{Using the identification between the arising internal Homs and the morphisms in $\DGCat$, one can alternatively work only with the $(\infty, 1)$-category associated to $\DGCat$, i.e. discard non-invertible 2-morphisms.} The monoidal unit is the category $\on{Vect}$. We denote the underlying binary product of two dg-categories $\sC$ and $\sD$ by $\sC \otimes \sD$, and for objects $c $ of $\sC$ and $d$ of $\sD$ we denote by $c \boxtimes d$ the corresponding `external product' object of $\sC \otimes \sD$. 

Suppose $\sC$ and $\sD$ are compactly generated, which is the case for all dg-categories considered in this paper. Concretely, $\sC \otimes \sD$ is then compactly generated by external products of compact objects from each factor, and the complexes of homomorphisms between such objects are simply the tensor product 
\[
      \on{Hom}_{\sC \otimes \sD}(c_1 \boxtimes d_1, c_2 \boxtimes d_2) \simeq \on{Hom}_{\sC}(c_1, c_2) \otimes \on{Hom}_{\sD}(d_1, d_2) 
\]
with the natural compositions.

The category of $\kappa$-twisted D-modules on the loop group, which we denote by $D_\kappa(LG)$, is naturally an algebra object of $\DGCat$. The underlying binary product is given by convolution, i.e. pushforward along the multiplication map$$LG \times LG \rightarrow LG.$$
We denote its $(\infty,2)$-category of left modules by $D_\kappa(LG)\mod$. Plainly, an object of $D_\kappa(LG)\mod$ is a dg-category $\sC$ equipped with an action 
\[
     D_\kappa(LG) \otimes \sC \rightarrow \sC 
\]
which is unital and associative up to coherent homotopy, and a 1-morphism between representations is a functor between the underlying dg-categories equipped with a datum of $D_\kappa(LG)$-equivariance.

The main example of interest for us is the canonical $D_\kappa(LG)$-module structure on  $\gk\mod$, see for example \cite{mys}. To get a feel for this action, we note that for an element $g \in LG$, convolution with the corresponding delta function 
\[
    \delta_g \star - : \gk\mod \simeq \gk\mod
\]
is induced by the adjoint action of $g$ on $\gk$.

\subsection{Invariant vectors and representations} Given a $D_\kappa(LG)$-module $\sC$, one can form its categories of twisted Iwahori and Whittaker invariant objects 
\[
    \sC^{I, \chi} \quad \text{and} \quad \sC^{LN, \psi}. 
\]
In our example of interest, these will exactly be the highest weight modules $\gk\mod^{I, \chi}$ of Section \ref{ss:hwminus}, and the category $\sW\mod$ of Section \ref{sss:wmod}, respectively. We now review this in more detail. 

\subsubsection{} We begin with the general construction of invariants. So, consider an ind-affine placid group ind-scheme $H$ equipped with a character sheaf $\phi$, see e.g. \cite{rdm} for a discussion of placidity.  

\begin{re} The following technical point may be safely ignored by the reader. The category of D-modules on a placid ind-scheme $X$ which we denote by $D(X)$ is denoted in \cite{rdm} and \cite{beraldo} by $D^*(X)$, and comes with functoriality for $*$-pushforwards. There is a  dual category $D^!(X)$ which comes with functoriality for $!$-pullbacks. The latter is symmetric monoidal under $!$-tensor product, and canonically acts on $D^*(X)$ by $!$-tensor product. In particular, when we speak of a character sheaf $\phi$, it canonically is an object of $D^!(X)$. However, a choice of a dimension theory on $X$, which always exists in our cases of interest, identifies $D^!(X)$ and $D(X)$.  \label{r:tensor} \end{re}

Associated to the character sheaf $\phi$ is a $D(H)$-module $\on{Vect}_\phi$. Explicitly, its underlying dg-category is $\on{Vect}$, and the action map is of the form 
\begin{equation} \label{e:actmap}
   D(H) \otimes \on{Vect}_\phi \rightarrow \on{Vect}_\phi, \quad \quad M \boxtimes V \mapsto \Gamma_{\on{dR}}(H, M \overset ! \otimes \phi ) \otimes V,
\end{equation}
where in the right-hand expression $\overset ! \otimes$ is the $!$-tensor product $D(X) \otimes D^!(X) \rightarrow D(X)$, cf. Remark \ref{r:tensor}, and the outer tensor product is of $k$-vector spaces. For any $D(H)$-module $\sC$, one may accordingly form its invariants and coinvariants 
\[
       \sC^{H, \phi} := \on{Hom}_{D(H)\mod}(\on{Vect}_\phi, \sC) \quad \text{and} \quad \sC_{H, \phi} := \on{Vect}_\phi \underset{D(H)} \otimes \sC. 
\]
Recalling the tautological equivalence $$D(H) \otimes \on{Vect}_\phi \simeq D(H),$$the action map \eqref{e:actmap} underlies a $D(H)$-equivariant functor $D(H) \rightarrow \on{Vect}_\phi$, which induces canonical `forgetting' and `inserting' maps 
\[
      \on{Oblv}: \sC^{H, \phi} \rightarrow \sC \quad \text{and} \quad \on{ins}: \sC \rightarrow \sC_{H, \phi}.  
\]
\label{s:placid}
\subsubsection{}\label{sss:compactopen} Suppose further now that $H$ is a group scheme. In this case, the map $D(H) \rightarrow \on{Vect}_\phi$ admits a left adjoint (i.e., $D(H)$ contains a constant sheaf) and hence one obtains an `averaging' adjunction 
\[
     \on{Oblv}: \sC^{H, \phi} \leftrightarrows \sC: \on{Av}_*^{H, \phi}. 
\]
Moreover, $\on{Av}_*^{H, \phi}$ canonically factors through through $\on{ins}$, and if the prounipotent radical of $H$ is of finite codimension the resulting map is an equivalence 
\[
   \sC_{H, \phi} \xrightarrow{\sim} \sC^{H, \phi}. 
\]
We freely pass between invariants and coinvariants for such groups in what follows.

Let us take $H$ to be the Iwahori $I$. The twisting $\kappa$ is canonically trivialized on $L^+G$, and we use the induced trivialization on its subgroup  $I$ going forwards. In particular, any $D_\kappa(LG)$-module $\sC$ carries an action of $D(I)$. With this, for an element $\chi \in \ft^* / \Lambda$ we may form the twisted invariants $\sC^{I, \chi}$. Specializing to $$\sC = \gk\mod,$$the obtained category $\gk\mod^{I, \chi}$ canonically identifies with the renormalized derived category of Harish-Chandra modules discussed in Section \ref{ss:hwminus}, cf. the references therein for the details. We note in passing that a similar statement holds for any compact open subgroup $H$ of $LG$.

\subsubsection{} \label{sss:whit} The contents of Section \ref{sss:compactopen} admit an analogue for the Whittaker invariants, despite $LN$ being an ind-scheme. Recall the generic additive character
\[
   \psi: LN \rightarrow \mathbb{G}_a
\]
of Section \ref{sss:defminusred}. We denote by the same letter the character sheaf $\psi^!( \on{exp}(z))$, where $\on{exp}(z)$ is the character sheaf on $\mathbb{G}_a$ whose underlying D-module is the exponential D-module placed in cohomological degree -1. 

The twisting $\kappa$ is canonically trivial on $LN$, and so to a $D_\kappa(LG)$-module $\sC$ one may form its Whittaker (co)invariants. It will be convenient for us to use the canonical equivalence 
\[
   \sC_{LN, \psi} \simeq  \sC^{LN, \psi},
\]
conjectured by Gaitsgory, and proven by Beraldo for $GL_n$ and in general by Raskin \cite{beraldo}, \cite{whit}. 

We denote by $\on{Av}_{*}^{\psi, r}$ the renormalized averaging functor, i.e. the composition 
\[
    \sC \xrightarrow{\on{ins}} \sC_{LN, \psi} \simeq \sC^{LN, \psi}. 
\]
It is given by convolution with an object, which we call the (twisted) renormalized constant sheaf
\[
       k^{\psi, r} \in D(LN)^{LN, \psi} \subset D_\kappa(LG)^{LN, \psi}. 
\]

\subsubsection{} \label{s:normcon}Let us recall an explicit presentation of $k^{\psi, r}$. To do so, first consider any ind-scheme of the form 
\[
 A \simeq \varinjlim_\alpha A_\alpha, 
\]
where each $A_\alpha$ is a pro-finite dimensional affine space and each transition map $A_\alpha \hookrightarrow A_\beta$ is the closed embedding of an open affine subspace of finite codimension (here, the word open means with respect to the pro-topology on $A_\beta$). Each $A_\alpha$, as with any placid scheme, carries a canonical constant sheaf $k(A_\alpha)$. Namely, consider any presentation of $A_\alpha$ as a cofiltered limit
\[
  A_\alpha \simeq \varprojlim_\gamma    A_\alpha^\gamma,
\]
where the $A_\alpha^\gamma$ are finite dimensional affine spaces, and the transition maps are smooth surjections. It follows that the $*$-pushforwards $D(A_\alpha^\gamma) \rightarrow D(A_\alpha^{\gamma'})$ admit left adjoints, i.e. $*$-pullbacks. We have by definition that
\[
    D(A_\alpha) \simeq \varinjlim_\gamma D(A_\alpha^\gamma),
\]
i.e. its category of D-modules is the colimit in $\DGCat$ of those on its finite dimensional quotients under $*$-pullback. In particular, the insertion of the constant sheaf from any step in the colimit is the sought-for constant sheaf $k(A_\alpha)$.

For any fixed index $\alpha_\circ$, there is an associated renormalized constant sheaf $k(A, A_{\alpha_\circ})$ on all of $A$. 
To describe this, note that by our assumptions the $*$-pushforwards $D(A_\alpha) \rightarrow D(A_\beta)$ admit right adjoints, i.e. $!$-pullbacks. With this, $k(A, A_{\alpha_\circ})$ is determined uniquely up to unique isomorphism by the properties that its $!$-restriction to any $A_\beta$ is isomorphic to a cohomological shift of a constant sheaf, and its $!$-restriction to $A_{\alpha_\circ}$ is $k(A_{\alpha_\circ})$.  Explicitly, we have by definition 
\[
 D(A) \simeq \varprojlim_\alpha D(A_\alpha),
\]
i.e. its category of D-modules is the cofiltered inverse limit in $\DGCat$ of those on open profinite dimensional subspaces under $!$-pullback. Hence, the existence follows from the fact that such $!$-pullbacks send, by our assumption on the maps $A_\alpha \hookrightarrow A_\beta$, constants sheaves to shifts of constant sheaves. 

\begin{re} As the notation suggests, the objects $k(A, A_{\alpha_\circ})$ are semi-infinite in nature, appearing like a constant sheaf along the pro-directions and a dualizing sheaf along the ind-directions. Informally, they may be thought of as halfway between the dualizing sheaf and the constant sheaf, which do not exist as objects of $D(A)$. In particular, the various $k(A, A_{\alpha})$ agree up to tensoring by cohomologically graded lines.  
\end{re}

Let us apply this discussion to $A = LN$. We then have, in the conventions of the present paper, that the kernel for renormalized averaging is 
\begin{equation} \label{e:avr}
     k^{\psi, r} \simeq k(LN, L^+N) \overset ! \otimes \psi. 
\end{equation}

\subsubsection{}\label{sss:affskr} It will also be convenient to use the affine Skryabin equivalence 
\[
       \gk\mod^{LN, \psi} \simeq \sW\mod,
\]
which is an important theorem of Raskin \cite{whit}. We will normalize this by asking that it exchange the functor $\on{Av}_*^{\psi, r}$ and the semi-infinite cohomology
\[
     C^{\frac{\infty}{2} + *}\big(L\fn, L^+\fn, (-) \otimes k_\psi \big): \gk\mod \rightarrow \sW\mod. 
\]

\begin{re} In the alternative proof of Arakawa's Theorem \ref{t:araminus} by the author and Raskin, the affine Skryabin equivalence plays a central role. Here, modulo the facts about the minus reduction, we will only use that semi-infinite cohomology factors through the Whittaker coinvariants, which is more or less automatic. \end{re}

\subsection{Convolution algebras}

\subsubsection{} Suppose that $H$ is as in Section \ref{s:placid}, and is equipped with an embedding $H \hookrightarrow LG$ and a trivialization $\tau$ of the restriction of $\kappa$. In this case, by tensor-hom adjunction we have for any $D_\kappa(LG)$-module $\sC$ an equivalence 
\begin{equation} \label{e:mapequiv}
\sC^{H, \phi} \simeq \on{Hom}_{D_\kappa(LG)\mod}(D_\kappa(LG)_{H, \phi}, \sC),
\end{equation}
where the appearing $(H, \phi)$-coinvariants are taken with respect to the right $D(H)$ action induced by right multiplication. 

\begin{re} If we were to instead use the left $D(H)$ action induced by right multiplication, we would rather consider $D_\kappa(LG)_{H, \phi^\vee}$, where $\phi^\vee$ is the inverse character sheaf. For ease of notation, we follow the former convention. \end{re}

The equivalence \eqref{e:mapequiv} yields a $D_\kappa(LG)$-equivariant evaluation map 
\[
      D_\kappa(LG)_{H, \phi} \otimes \sC^{H, \phi} \rightarrow \sC, \quad \quad M \boxtimes c \mapsto M \overset H \star c. 
\]
In particular, for a pair of such subgroups $(H_i, \phi_i, \tau_i)$, $i = 1, 2,$  one has a canonical map 
\[
     D_\kappa(LG)_{H_2, \phi_2}^{H_1, \phi_1} \otimes \sC^{H_2, \phi_2} \rightarrow \sC^{H_1, \phi_1}. 
\]
If both $H_1$ and $H_2$ are as in Sections \ref{sss:compactopen} and \ref{sss:whit}, so that we may canonically identify their invariants and coinvariants, we write this as 
\[
        D_\kappa(H_1, \phi_1 \backslash LG / H_2, \phi_2) \otimes \sC^{H_2, \phi_2} \rightarrow \sC^{H_1, \phi_1}. 
\]
In the remainder of this section, we collect some basic notation for the specific categories of bi-equivariant sheaves which will appear later.

\subsection{Twisted affine Hecke categories} \label{ss:twaffheck}
Fix a character sheaf $\chi \in \ft^* / \Lambda$, write $\mathring{I}$ for the prounipotent radical of $I$, and consider the category 
\[
      D_\kappa(\mathring{I} \backslash LG / I, \chi). 
\]

Recall the Bruhat decomposition, i.e. that $LG$ admits a  stratification with strata 
$
 \mathring{I} w I,$ for  $w \in W$. It is standard that the bi-equivariant sheaves on a fixed stratum 
$
       D_\kappa(\mathring{I} \backslash \mathring{I}wI / I, \chi)
$
admit an equivalence with $\on{Vect}$. Indeed, for any coset representative $\dot{w}$ of $w$, consider the projection and multiplication maps
\[
       I \xleftarrow{\pi} \mathring{I} \times \dot{w} \times I \xrightarrow{\mu}  \mathring{I}wI.
\]
We may write $\pi^! \chi$ as $\mu^! \phi$, for some invertible sheaf $\phi$ on $\mathring{I}wI$. With this, the functor from $\on{Vect}$ sending $k$ to the twisted constant sheaf $$k(\mathring{I}wI) \overset ! \otimes \phi$$is an equivalence. This is only canonical up to tensoring by a line in cohomological degree zero, namely a $!$-fibre of $\chi$. 

We denote the corresponding standard and costandard objects, i.e. the $!-$ and $*$-extensions of $k[\ell(w)]$ in $\on{Vect}$, by 
\[
     j_{w, !}^{w \chi, \chi} \text{ and } j_{w, *}^{w\chi, \chi} \in D_\kappa(\mathring{I} \backslash LG / I, \chi),
\]
respectively. In particular, we will only define these objects up to tensoring by a line in cohomological degree zero, which suffices for our purposes. We also follow the practice of calling the objects $j_{w, *}^{w \chi, \chi}$ intertwining functors, in view of their function-theoretic counterparts.

  Both $j_{w, !}^{w \chi, \chi}$ and $j_{w, *}^{w \chi, \chi}$  descend to $D_\kappa(I, w\chi \backslash LG / I, \chi)$, where we use the level $\kappa$ action of $W$ on $\ft^* / \Lambda$ to obtain $w\chi$ from $\chi$. Plainly, this action may be constructed by noting that the level $\kappa$ dot action of $W$ on $\ft^*$, cf. Section \ref{sss:dotaction}, descends to a well-defined action on 
$
     \ft^* / \Lambda. 
$

Fix any $\lambda \in \ft^*$ and real affine coroot $\halpha$ of $\gk$. As $G$ is adjoint, and in particular $\Lambda$ is the root lattice, it follows that
\[
       s_{\halpha} \in W_\lambda \quad \text{if and only if} \quad  s_{\halpha} (\lambda + \Lambda) = \lambda + \Lambda,
\]
cf. Section \ref{sss:intwylgroup} for a discussion of the integral Weyl group $W_\lambda$. In particular, we may speak of the integral Weyl group $W_\chi$ of a twist $\chi$. Namely, it is the subgroup of $W$ generated by the $s_{\halpha}$ fixing $\chi$, or equivalently the integral Weyl group of any lift $\tilde{\chi} \in \ft^*$. 
\label{ss:twheckecat}

\subsection{Twisted Whittaker sheaves on the affine flag manifold} \label{ss:twwhitaff} Finally, we would like to discuss some basic properties of the category 
\[
   D_\kappa(LN, \psi \backslash LG / I, \chi). 
\]

Recall by the Iwasawa decomposition that $LG$ admits a stratification with strata 
$LNwI,$ for $w \in W$. Denote by $\Wf$ the subset of $W$ given by elements of minimal length in their right $W_f$ cosets. The categories on each stratum admit equivalences 
\[
    D_\kappa( LN, \psi \backslash LNwI / I, \chi) \simeq \begin{cases} \on{Vect} & \text{if $w \in \Wf$}, \\ 0 & \text{otherwise}. \end{cases}
\]
To describe the former equivalences, again fix a coset representative $\dot{w}$ and consider the multiplication 
\[
    LN \times I \simeq LN \times \dot{w} \times I \xrightarrow{\mu} LNwI.
\]
By the assumption that $w \in \Wf$, we may write $\psi \boxtimes \chi$ as $\mu^! \phi$, for an invertible sheaf $\phi$ on $LNwI$. The map from $\on{Vect}$ sending $k$ to the twisted constant sheaf 
\begin{equation} \label{e:twcs}
       k^{\psi, \chi} :=  k(LNwI, L^+NwI) \overset ! \otimes \phi
\end{equation}
is an equivalence, where the renormalized constant sheaf $k(LNwI, L^+NwI)$ is defined as in Section \ref{s:normcon}. We denote the corresponding costandard object, i.e. the $*$-extension of $k^{\psi, \chi}[\ell(w_\circ w)]$, by 
\begin{equation} \label{e:costwhit}
    j_{w, *}^{\psi, \chi}.
\end{equation}

\section{Spectral flow and intertwining functors}
\label{s:sfint}

   The contents of this section are as follows. In the first half, we consider the conjugations of Drinfeld--Sokolov reduction by arbitrary elements of $LG$. When restricted to the category of highest weight modules, we find that the resulting (nonzero) functors are in canonical bijection with the double cosets in 
   \[
      LN \backslash LG / I
   \]
   which support Whittaker sheaves. Concretely, these are parametrized by the coweight lattice, and the corresponding functors are essentially the usual  spectrally flowed reduction functors, see Remark \ref{r:spectralflow} below for the precise relation.

   In the second half, we provide a canonical factorization of each spectrally flowed reduction applied to a block of highest weight modules. These depend on the block  
   and refine the fact, which essentially appears in Frenkel--Gaitsgory \cite{fg06loc}, that the various reductions all differ from one another by intertwining functors. As we will see in Section \ref{s:fkw}, these factorizations underlie the combinatorics of the Frenkel--Kac--Wakimoto conjecture for the plus reduction.

\subsection{} For a generic character $\psi$ of $LN$ of conductor one, cf. Section \ref{sss:defminusred}, consider the semi-infinite cohomology functor 
\[
  \mathring{\Psi}^e := C^{\frac{\infty}{2} + *}( L\fn, L^+\fn, - \otimes k_\psi): \gk\mod \rightarrow \sW\mod.  
\]
With our intended applications to the plus reduction in mind, we will consider in this section the conjugates of $\mathring{\Psi}^e$ by elements of $LG$, and study their restriction to highest weight modules.

\begin{re} \label{r:mred}As the minus reduction $\Psi^-$ is normalized instead with respect to the first congruence subalgebra of $L^+\fn$, by trivializing its relative determinant with $L^+\fn$, cf. Section \ref{sss:reldet}, one obtains an equivalence
\[
   \mathring{\Psi}^e[ \ell(w_\circ)] \simeq \Psi^-. 
\]
\end{re}

\subsection{} Fix any element $g \in LG$. Conjugation by $g$ yields an automorphism $\on{Ad}_g: \gk \rightarrow \gk.$ We denote the corresponding pushforward of modules, i.e. restriction along $\on{Ad}_{g^{-1}}$, by 
\[
   \on{Ad}_{g, *}: \gk\mod \rightarrow \gk\mod. 
\]

We denote by $\mathring{\Psi}^g$ its composition with the previous semi-infinite cohomology functor, i.e.   
\begin{equation} \label{e:gdsred}
      \mathring{\Psi}^g : \gk\mod \xrightarrow{\on{Ad}_{g, *}} \gk\mod \xrightarrow{\mathring{\Psi}^e} \sW\mod.
\end{equation}

\begin{ex} \label{e:pred}Fix a coset representative $\dot{w}_\circ$ in $N_G(T)$ for the longest element $w_\circ$ of $W_f$. Let us specialize the preceding discussion to $g = \dot{w}_\circ t^{\rhoc}$. In this case \eqref{e:gdsred} is, up to a cohomological shift coming from the normalizing compact open subalgebras, the plus Drinfeld--Sokolov reduction. 

More precisely, as the plus reduction is normalized with respect to $L^+\fn^-$, rather than $$\on{Ad}_{t^{-\crho}} L^+\fn^-,$$by trivializing the appearing relative determinant line we obtain an isomorphism  
\[
  \mathring{\Psi}^{\dot{w}_\circ t^{\check{\rho}}}[2\langle \rho,  \crho \rangle] \simeq \Psi^+. \
\]
\end{ex}

\subsection{} We will be interested in the behavior of the functor  \eqref{e:gdsred} on highest weight modules. More precisely, let us fix $\chi \in \ft^* / \Lambda$, which we identify with the corresponding character sheaf on $I$, and consider the functor 
\begin{equation} \label{e:gminusred}
   \gk\mod^{I, \chi} \xrightarrow{\on{Oblv}} \gk\mod  \xrightarrow{\on{Ad}_{g, *}} \gk\mod \xrightarrow{\mathring{\Psi}^e} \sW\mod,
\end{equation}
cf. Section \ref{ss:hwminus} for a discussion of $\gk\mod^{I, \chi}$. We will rewrite \eqref{e:gminusred} as follows. For any $D_\kappa(LG)$-module module $\sC$, we may form the composition 
\begin{equation} \label{e:catcomp}
    \sC^{I, \chi} \xrightarrow{\on{Oblv}} \sC \xrightarrow{\delta_g \star - } \sC \xrightarrow{\Avr} \sC^{LN, \psi};
\end{equation}
the reader may consult Section \ref{ss:catacts} for a discussion of the appearing terms. Taking $\sC = \gk\mod$ then recovers \eqref{e:gminusred}. In particular, \eqref{e:gminusred} is given by convolution with an object 
\[
   \mathring{\mathscr{K}}^g \in D_\kappa(LN, \psi \backslash LG / I, \chi). 
\]

To identify it explicitly, recall that on a stratum $LNgI$ admitting nonzero Whittaker sheaves we denote by $k^{\psi, \chi}$ the $(\psi, \chi)$-twist of its renormalized constant sheaf, cf. Equation \eqref{e:twcs}. 

\begin{pro} \label{l:kg} Consider the double coset $LNgI$. If this does not support Whittaker sheaves, i.e. 
\[
      D(LN, \psi \backslash LNgI / I, \chi) \simeq 0,
\]
then $\mathring{\mathscr{K}}^g$ vanishes. Otherwise, $\mathring{\mathscr{K}}^g$ is the $*$-extension of the renormalized constant sheaf $k^{\psi, \chi}$ on $LNgI$.
\end{pro}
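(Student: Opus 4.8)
The plan is to unwind the definition of $\mathring{\mathscr{K}}^g$ as the kernel of the composite \eqref{e:catcomp} for $\sC = D_\kappa(LG)$ acting on itself, and then analyze the two building blocks separately: convolution with $\delta_g$, and the renormalized Whittaker averaging $\Avr$. Since $\Avr$ is given by convolution with the renormalized constant sheaf $k^{\psi, r} \in D_\kappa(LG)^{LN, \psi}$ of Equation \eqref{e:avr}, the kernel $\mathring{\mathscr{K}}^g$ is the image of the twisted constant sheaf $\chi$ on $\mathring{I}\backslash LG /I$ (the kernel of $\on{Oblv}$ followed by $\on{ins}$ on the $I$-side, i.e. the $*$-extension of $\chi$ along $\mathring{I} e I \hookrightarrow LG$, which is just $\chi$ itself viewed on $LG/I$) under $\delta_g\star -$ followed by $k^{\psi, r}\star -$. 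Concretely, $\delta_g \star \chi$ is the corresponding twisted constant sheaf supported on the single coset $gI$, i.e. the $*$-extension of $k^{g\chi, \chi}$ (a shift of the constant sheaf tensored with the appropriate twist) along $gI \hookrightarrow LG$; convolving this with $k^{\psi, r}$ amounts to computing the $*$-pushforward of a twisted constant sheaf along the action map $LN \times gI \to LG$.

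The first main step is to reduce the computation to the stratum $LNgI$: because $k^{\psi, r}$ is (twisted) constant along $LN$-orbits and $\delta_g \star \chi$ is supported on $gI$, the convolution $k^{\psi,r}\star(\delta_g\star\chi)$ is supported on the $LN$-orbit of $gI$, which is exactly the double coset $LNgI$. I would then observe that the support is actually a single $(LN, I)$-double coset, so by base change it suffices to understand the restriction of the convolution to this locally closed substack and then apply $*$-extension. The second main step is the local computation on $LNgI$: here I invoke the dichotomy from Section \ref{ss:twwhitaff}, namely that $D_\kappa(LN, \psi\backslash LNgI/I, \chi)$ is either $0$ or equivalent to $\on{Vect}$. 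In the vanishing case, the relevant compatibility of $\psi$ and $\chi$ fails along $LNgI$ (equivalently, $g$ does not lie in a coset $LNwI$ with $w \in \Wf$), and the Whittaker averaging of anything supported there is zero, giving $\mathring{\mathscr{K}}^g = 0$. In the non-vanishing case, the convolution is, by the $\on{Vect}$-equivalence, forced to be a cohomological shift of the twisted renormalized constant sheaf $k^{\psi,\chi}$ of Equation \eqref{e:twcs}; one then pins down the shift.

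The delicate point — and where I expect the real work to lie — is the bookkeeping of cohomological shifts and relative-determinant lines: matching the shift $[\ell(w_\circ w)]$ built into the definition \eqref{e:costwhit} of the costandard object against the shifts coming from (i) the renormalization of $k^{\psi,r}$ as $k(LN, L^+N)\overset{!}{\otimes}\psi$, (ii) the renormalization $k(LNwI, L^+NwI)$ appearing in $k^{\psi,\chi}$, and (iii) the length function and the semi-infinite/dimension-torsor normalizations. Since the statement as given only claims the identification "up to tensoring by a line in cohomological degree zero" (indeed the costandard objects themselves are only defined up to such a line, as noted in Section \ref{ss:twheckecat}), I would handle (i)--(iii) by a clean comparison of dimension theories: compute the relative dimension of the multiplication map $\mu: LN \times \dot{w}\times I \to LNwI$ restricted to the appropriate pro-finite-dimensional pieces, and check it accounts exactly for the discrepancy between $\ell(w)$-type shifts and the $\ell(w_\circ w)$ shift, using that $w_\circ$ is the longest element of $W_f$ and $w\in\Wf$. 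This is a finite, if fiddly, linear-algebra-of-root-spaces calculation; everything else is formal manipulation of kernels and invariants. Once the shift is confirmed to be the one in \eqref{e:costwhit} up to a degree-zero line, the proposition follows, since both sides are then the $*$-extension of the same object on the open stratum.
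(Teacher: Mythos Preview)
Your approach is essentially the same as the paper's: both identify $\mathring{\mathscr{K}}^g$ as the $*$-pushforward of $k^{\psi,r} \boxtimes \chi$ along the multiplication map $\mu: LN \times \{g\} \times I \to LNgI \hookrightarrow LG$, and then analyze the result on the single stratum. The paper dispatches the stratum computation in one line by observing that $\mu$ is a fibration by prounipotent groups compatible with the dimension-torsor trivializations used in Equations \eqref{e:avr} and \eqref{e:twcs}; this simultaneously handles the vanishing case and pins down the nonvanishing case as $k^{\psi,\chi}$.

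The one place you go astray is in the ``delicate point'': you are conflating this Proposition with the subsequent Corollary \ref{c:shiftds}. The Proposition as stated concerns the \emph{unshifted} objects $\mathring{\mathscr{K}}^g$ and $k^{\psi,\chi}$; the shift $[\ell(w_\circ w)]$ enters only afterwards, when one passes to $\mathscr{K}^g$ and $j_{w,*}^{\psi,\chi}$. So your item (iii), and the root-space calculation matching $\ell(w)$ against $\ell(w_\circ w)$, is not part of this proof at all. What remains is exactly your items (i) and (ii): checking that the normalization $k(LN, L^+N)$ in $k^{\psi,r}$ and the normalization $k(LNwI, L^+NwI)$ in $k^{\psi,\chi}$ are exchanged under $\mu_*$ with no cohomological shift. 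This is precisely the compatibility of dimension torsors the paper invokes, and it holds because $\mu$ restricts to an isomorphism $L^+N \times \{g\} \times I \xrightarrow{\sim} L^+NgI$ (both sides being the $I$-orbit of $g$ inside $LNgI$, using $w \in \Wf$). Once you drop the extraneous shift bookkeeping, your argument and the paper's coincide.
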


\begin{proof} It follows from \eqref{e:catcomp} that $\mathring{\mathscr{K}}^g$ is the $*$-pushforward along the multiplication map 
\[
       LN \times I \simeq LN \times \{g\} \times  I \xrightarrow{\mu} LNgI \hookrightarrow LG
\]
of the sheaf $k^{\psi, r} \boxtimes \chi$, where $k^{\psi, r}$ is the kernel for the renormalized Whittaker averaging functor $\on{Av}_*^{\psi, r}$, cf. Sections \ref{sss:whit} and \ref{s:normcon} for further discussion. As $\mu$ is a fibration by prounipotent groups and is compatible with the trivializations of dimension torsors used in forming $k^{\psi, r}$ and $k^{\psi, \chi}$, cf. Equations \eqref{e:avr} and \eqref{e:twcs} respectively, the lemma follows.
\end{proof}

\subsection{} \label{ss:onlyondoublecoset}By Proposition \ref{l:kg}, the functor \eqref{e:gminusred} depends only on the double coset $LNgI$. In particular, if $\mathring{\mathscr{K}}^g$ is nonvanishing, we may write 
\begin{equation} \label{e:doublecoset}
    LNgI = LNwI \quad \quad \text{for some } w \in \Wf, 
\end{equation}
where as in Section \ref{ss:twwhitaff} we denote by $\Wf$ the elements of the extended affine Weyl group of minimal length in their right $W_f$ cosets. 

Therefore, if we introduce the cohomological shifts
 $
    \Psi^g := \mathring{\Psi}^g[\ell(w_\circ w)]
 $
and $\mathscr{K}_g := \mathring{\mathscr{K}}^g[\ell(w_\circ w)]$,
we may  deduce the following from Proposition \ref{l:kg}. 

\begin{cor} \label{c:shiftds} For $g$ and $w$ as in Equation \eqref{e:doublecoset}, there exists an isomorphism 
\begin{equation} \label{e:sintop}
    \mathscr{K}^g \simeq j_{{w}, *}^{\psi, \chi},
\end{equation} 
where $j_{w, *}^{\psi, \chi}$ is as in Equation \eqref{e:costwhit}.
\end{cor}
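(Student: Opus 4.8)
The plan is to reduce Corollary~\ref{c:shiftds} to Proposition~\ref{l:kg} by a careful comparison of normalizing conventions and cohomological shifts. First I would observe that Proposition~\ref{l:kg} already identifies $\mathring{\mathscr{K}}^g$, in the nonvanishing case, as the $*$-extension of the renormalized twisted constant sheaf $k^{\psi,\chi}$ on the stratum $LNgI$. By Equation~\eqref{e:doublecoset} we may write $LNgI = LNwI$ for a unique $w \in \Wf$, so $\mathring{\mathscr{K}}^g$ is the $*$-extension of $k^{\psi,\chi}$ on $LNwI$. On the other hand, by the definition of $j_{w,*}^{\psi,\chi}$ in Equation~\eqref{e:costwhit}, this costandard object is precisely the $*$-extension of $k^{\psi,\chi}[\ell(w_\circ w)]$ on $LNwI$. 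Hence the two sheaves differ exactly by the cohomological shift $[\ell(w_\circ w)]$, which is by definition absorbed into $\mathscr{K}^g := \mathring{\mathscr{K}}^g[\ell(w_\circ w)]$. This gives the desired isomorphism $\mathscr{K}^g \simeq j_{w,*}^{\psi,\chi}$ directly.

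The second step is to make sure the identification is genuinely canonical up to the ambiguities already flagged in the paper. Both $k^{\psi,\chi}$ (via Equation~\eqref{e:twcs}) and $j_{w,*}^{\psi,\chi}$ are only defined up to tensoring by a one-dimensional graded line of cohomological degree zero — namely a $!$-fibre of $\chi$ — and likewise $\mathring{\mathscr{K}}^g$ inherits the same ambiguity from the construction of $k^{\psi,r}$ in Section~\ref{s:normcon}. So I would phrase the conclusion as an isomorphism existing for some choice of these trivializations, exactly as in the statement. The key point making the comparison work is that the multiplication map $\mu\colon LN \times \{g\} \times I \to LNwI$ is a fibration by prounipotent groups compatible with the dimension-torsor trivializations, so that $*$-pushforward along $\mu$ of $k^{\psi,r}\boxtimes\chi$ and the renormalized constant sheaf $k(LNwI, L^+NwI)\overset{!}{\otimes}\phi$ agree — this is precisely what the proof of Proposition~\ref{l:kg} established.

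The only genuine content beyond bookkeeping is the passage $LNgI = LNwI$ with $w \in \Wf$, i.e.\ the claim that a Whittaker-supporting double coset $LNgI$ is governed by the minimal-length representative of its right $W_f$-coset. This follows from the Iwasawa stratification of $LG$ recalled in Section~\ref{ss:twwhitaff}: the strata are $LNwI$ for $w \in W$, they support nonzero twisted Whittaker sheaves exactly when $w \in \Wf$, and these strata are disjoint, so every $g$ lies in exactly one of them. I expect the main (mild) obstacle to be purely notational: keeping track of which cohomological shifts are built into $\Psi^g$ versus $\mathring{\Psi}^g$ and checking that $[\ell(w_\circ w)]$ is the correct normalization so that $\mathscr{K}^g$ matches $j_{w,*}^{\psi,\chi}$ rather than some other shift; once the conventions of Equations~\eqref{e:twcs}, \eqref{e:costwhit} and the definitions of $\Psi^g, \mathscr{K}_g$ are aligned, the corollary is immediate from Proposition~\ref{l:kg}.
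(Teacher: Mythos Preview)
Your proposal is correct and matches the paper's approach: the paper does not give an explicit proof of this corollary but simply states that one may deduce it from Proposition~\ref{l:kg}, and your argument spells out exactly that deduction by matching the definition of $\mathscr{K}^g := \mathring{\mathscr{K}}^g[\ell(w_\circ w)]$ against the definition of $j_{w,*}^{\psi,\chi}$ in Equation~\eqref{e:costwhit}.
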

In what follows, we refer to such functors as the spectrally flowed Drinfeld--Sokolov reductions.

\begin{ex}\label{e:pmareint} As important special cases, Corollary \ref{c:shiftds} identifies the plus and minus reductions with the corresponding costandard objects. Namely, for the minus reduction, it follows from Remark \ref{r:mred} and Corollary \ref{c:shiftds} that one has isomorphisms
\[
    \Psi^- \simeq \Psi^e \quad \text{and} \quad \mathscr{K}^e \simeq  j_{e, *}^{\psi, \chi} . 
\]
For the plus reduction, note that the element $w_\circ e^{\crho} \in W$ lies in $\Wf$, and has length 
$$ 2 \langle \rho, \crho \rangle - \ell(w_\circ).$$
It therefore follows from Example \ref{e:pred} and Corollary \ref{c:shiftds} that one has isomorphisms
\[
    \Psi^+ \simeq \Psi^{\dot{w}_\circ t^{\crho}}\quad \text{and} \quad \mathscr{K}^{\dot{w}_\circ t^{\crho}} \simeq j_{w_\circ e^{\crho}, *}^{\psi, \chi}. 
\]
\end{ex}

\begin{re} \label{r:spectralflow}We should mention that our usage of the term spectrally flowed reduction is slightly nonstandard. To explain this, consider the tautological identifications  
\[
     \Wf \hspace{.1cm} \simeq\hspace{.1cm} W_f \backslash W \hspace{.1cm}\simeq\hspace{.1cm} \check{\Lambda}.
\]
For any $\check{\mu} \in \check{\Lambda}$ let us write $w e^{\check{\mu}}$ for the corresponding element of $\Wf$, i.e. the minimal length element in its right $W_f$ coset. By definition, our associated reduction functor $$\Psi^{\check{\mu}} := \Psi^{\dot{w} t^{\check{\mu}}}$$ is given up to an overall cohomological shift by (i) first applying the spectral flow $\on{Ad}_{t^{\check{\mu}}}$ to a $\gk$-module, (ii) further conjugating by $\on{Ad}_{\dot{w}_f}$, and then (iii) applying the minus reduction. We note that if $\check{\mu}$ is  antidominant, step (ii) is trivial, in which case this agrees up to notational conventions with the standard usage, see e.g. \cite{af19}.  
\end{re}

\subsection{} In the remainder of this section we would like to record a canonical factorization, for any $w \in \Wf$, of the corresponding costandard object
\[
    j_{{w}, *}^{\psi, \chi}. 
\]
%

We begin with the factorization at the level of the affine Weyl group. Consider the integral Weyl group $W_\chi$ of the character sheaf $\chi$, cf. the final paragraphs of Section \ref{ss:twheckecat}. Recall this is a Coxeter system, i.e. a Coxeter group with a preferred set of generators. 

\begin{lemma}For any $w \in W$, there is a unique element $w_-$ of minimal length in the double coset 
\begin{equation}
    W_f w W_\chi.
\end{equation}
Moreover, the $w_-$-conjugated intersection  \begin{equation} \label{e:parsb} w_-^{-1} W_f w_- \cap W_\chi\end{equation}
is a parabolic subgroup of $W_\chi$. In particular, if the  subgroup \eqref{e:parsb} is trivial, $w$ admits a unique factorization 
\begin{equation} \label{e:canfact}
   w = w_f   w_- w_\chi, \quad \quad \quad \text{where  $w_f \in W_f$ and  $w_\chi \in W_\chi$.} 
\end{equation}
\label{l:coxmin}
\end{lemma}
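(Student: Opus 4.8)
The statement is a form of Kilmoyer's double-coset lemma, applied to the pair of reflection subgroups $W_f, W_\chi \subseteq W$. The one subtlety is that $W_f$ is a standard parabolic subgroup of the Coxeter group $W_f \ltimes \check{Q}$, to which textbook Coxeter combinatorics apply verbatim, whereas $W_\chi$ carries its own Coxeter structure (cf.\ \S\ref{sss:intwylgroup}) which is \emph{not} that of a standard parabolic of $W$; so I would begin by recording the two facts about $W_\chi$ that make the usual arguments survive. First, the length function of $W$ is computed by inversions among the positive real affine coroots $\check{\Phi}^+$, so that for \emph{every} $\halpha \in \check{\Phi}^+$ one has $\ell(w s_{\halpha}) > \ell(w) \Leftrightarrow w\halpha \in \check{\Phi}^+$ and $\ell(s_{\halpha} w) > \ell(w) \Leftrightarrow w^{-1}\halpha \in \check{\Phi}^+$ (the length-zero elements of $W$ permute $\check{\Phi}^+$, so this is inherited from $W_f \ltimes \check{Q}$). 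Second, every positive coroot of $W_\chi$, i.e.\ every element of $\check{\Phi}^+_\chi$, already lies in $\check{\Phi}^+$; this follows by induction on $W_\chi$-height from the fact that a real affine coroot which is a sum of two elements of $\check{\Phi}^+$ must itself lie in $\check{\Phi}^+$.

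Next I would produce $w_-$: a minimal-length element of $W_f w W_\chi$ exists since lengths are nonnegative. By the first fact, minimality of $w_-$ is equivalent to $w_-^{-1}$ sending the finite simple coroots into $\check{\Phi}^+$ and $w_-$ sending the $W_\chi$-simple coroots into $\check{\Phi}^+$, and a height induction (a positive root of height $\geq 2$ is a positive root of smaller height plus a simple one, and a sum of two elements of $\check{\Phi}^+$ which is a coroot is positive) upgrades this to $w_-^{-1}(\check{\Phi}^+_f) \subseteq \check{\Phi}^+$ and $w_-(\check{\Phi}^+_\chi) \subseteq \check{\Phi}^+$. The former says $w_-$ is the minimal representative of the left coset $W_f w_-$, whence $\ell(a w_-) = \ell(a) + \ell(w_-)$ for $a \in W_f$; the latter gives the reflection-subgroup analogue $\ell(w_- b) = \ell(w_-) + \ell_\chi(b)$ for $b \in W_\chi$ (by standard facts about reflection subgroups), and combining the two, $\ell(a w_- b) = \ell(a) + \ell(w_-) + \ell_\chi(b)$. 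Uniqueness of the minimal element is then immediate, since any other one is of the form $a w_- b$ with $a \in W_f$, $b \in W_\chi$, forcing $\ell(a) = \ell_\chi(b) = 0$.

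The crux is to show that $W_L := w_-^{-1} W_f w_- \cap W_\chi$ is the \emph{standard} parabolic subgroup of $W_\chi$ on the set $L := \{\, s'\ \text{a } W_\chi\text{-simple reflection}: w_- s' w_-^{-1} \in W_f \,\}$; in particular it is a parabolic subgroup of $W_\chi$ as claimed. The inclusion $\langle L \rangle \subseteq W_L$ is clear, and for the reverse I would take $v \in W_L \setminus \{e\}$ with a reduced expression $v = s_1 \cdots s_k$ in $W_\chi$ and induct on $k$: since $w_- v w_-^{-1} \in W_f$ is nontrivial and $\ell(w_- v) = \ell(w_-) + k$ by the additivity above, the exchange/deletion condition inside the Coxeter group $W_f$, applied to $w_- v w_-^{-1}$ and combined with the minimality of $w_-$, forces the removed finite simple reflection to be realized by one of the $s_i$; this yields $s_i \in L$ and $v s_i \in W_L$, so the induction closes. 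This exchange-condition bookkeeping is the main obstacle, together with the length-additivity $\ell(w_- b) = \ell(w_-) + \ell_\chi(b)$ it rests on: it is precisely here that one must reprove Kilmoyer's lemma from the coroot-theoretic description of $\ell$ rather than cite it, because $W_\chi$ is not a standard parabolic of $W$.

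Finally, for the factorization: since $w_-$ is the minimal representative of $W_f w W_\chi$ and $w$ lies in this double coset, $w = w_f w_- w_\chi$ for some $w_f \in W_f$ and $w_\chi \in W_\chi$, which gives existence. The left $W_f$-cosets contained in $W_f w_- W_\chi$ are precisely the $W_f w_- v$ for $v \in W_\chi$, and $W_f w_- v = W_f w_- v'$ exactly when $v' v^{-1} \in w_-^{-1} W_f w_- \cap W_\chi = W_L$, so they are in bijection with $W_L \backslash W_\chi$. Hence $w = w_f' w_- w_\chi'$ is another factorization precisely when $w_\chi' = \ell w_\chi$ for some $\ell \in W_L$ (and then $w_f'$ is determined), so the factorization is unique if and only if $W_L = \{e\}$, as asserted.
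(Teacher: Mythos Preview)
Your approach is more ambitious than the paper's, which simply cites the author's earlier work for the existence/uniqueness of $w_-$ and the parabolic claim, and then deduces the factorization from the observation that the stabilizer of $w_-$ under the $W_f \times W_\chi$-action on $W$ projects isomorphically onto $w_-^{-1} W_f w_- \cap W_\chi$ (your final paragraph is essentially this same stabilizer argument). So you are attempting a self-contained Kilmoyer-type proof where the paper offers none.

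However, there is a genuine gap: the additivity formula $\ell(w_- b) = \ell(w_-) + \ell_\chi(b)$ for $b \in W_\chi$ is \emph{false}. Take $W = \langle s_0, s_1\rangle$ the affine Weyl group of type $A_1$, $W_f = \langle s_1\rangle$, and $W_\chi = \langle s_1 s_0 s_1\rangle$; then $w_- = e$ is minimal in its double coset, but $\ell(s_1 s_0 s_1) = 3$ while $\ell_\chi(s_1 s_0 s_1) = 1$. The point is that for a reflection subgroup that is not a standard parabolic, the restriction of the ambient length $\ell$ to $W_\chi$ is \emph{not} the intrinsic Coxeter length $\ell_\chi$, and Dyer's theory of reflection subgroups gives compatibility of Bruhat orders, not this length identity. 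Since both your uniqueness argument (deducing $\ell(a) = \ell_\chi(b) = 0$) and your exchange-condition argument for the parabolic claim (using $\ell(w_- v) = \ell(w_-) + k$) rest on this identity, neither goes through as written. The salvage is to work throughout with inversion sets in $\check\Phi^+$ rather than lengths: minimality of $w_-$ gives $w_-(\check\Phi^+_\chi) \subset \check\Phi^+$ and $w_-^{-1}(\check\Phi^+_f) \subset \check\Phi^+$, and one then shows directly that the reflections in $w_-^{-1} W_f w_- \cap W_\chi$ are exactly the $s_\beta$ with $\beta \in \check\Phi^+_\chi$ and $w_-\beta \in \check\Phi^+_f$, a set closed under root addition inside $\check\Phi_\chi$; a separate argument, again via inversion sets rather than your length formula, handles uniqueness.
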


\begin{re} To orient the reader, the relation of the statement of the lemma to Frenkel--Kac--Wakimoto's conjecture will be as follows. The group appearing in Equation \eqref{e:parsb} will be nontrivial if and only if the reduction vanishes. When it is trivial, the canonical factorization of Equation \eqref{e:canfact}, and in particular the length of $w_\chi$ in the Coxeter system $W_\chi$, will produce the cohomological shift of the conjecture. 
\end{re}

\begin{proof}[Proof of Lemma \ref{l:coxmin}] We proved the first two assertions in Lemma 3.2.9 of \cite{qfle}. The final statement follows by noting that the stabilizer of $w_-$ under the right action 
\[
    W \times W_f \times W_\chi \rightarrow W, \quad \quad y \cdot (w_f, w_\chi) = w_f^{-1} y w_\chi,
\]
is identified via the projection onto $W_\chi$ with \eqref{e:parsb}. 
. \end{proof}

We next lift Lemma \ref{l:coxmin} to a statement about costandard objects as follows. Fix an element $w \in \Wf$, and consider the projection 
\[
 \pi: W \rightarrow W_f \backslash W.
\]
By Lemma \ref{l:coxmin}, we have a canonical identification of the arising $W_\chi$-orbit as  
\[
     \pi(w) \cdot W_\chi \simeq (w_-^{-1} W_f w_- \cap W_\chi) \backslash W_\chi, 
\]
which is afforded by acting on the minimal length element $\pi(w_-)$. As the stabilizer of the latter is a parabolic subgroup of $W_\chi$, we may canonically write 
\begin{equation} \label{e:canfact2} 
      \pi(w) = \pi(w_-) \cdot  w_\chi, 
\end{equation} 
where $w_\chi \in W_\chi$ is the unique element of minimal length in its right $w_-^{-1} W_f w_- \cap W_\chi$ coset. With these preparations, we may describe the lift to intertwining functors.

\begin{pro} Let $w$, $w_-, w_\chi$ be as in the preceding paragraph. There is an isomorphism 
\[
    j_{{w}, *}^{\psi, \chi} \hspace{1mm}\simeq  \hspace{1mm}j_{e, *}^{\psi, w_- \chi}  \hspace{1mm}\overset I \star  \hspace{1mm}j_{{w}_-, *}^{w_- \chi, \chi} \hspace{1mm} \overset I \star  \hspace{1mm} j_{{w}_\chi, *}^{\chi, \chi},
\]
and the appearing object $j_{{w}_-, *}^{w_-\chi, \chi}$ is clean, i.e. coincides with the standard object $j_{{w}_-, !}^{w_- \chi, \chi}$. \label{p:3steps}
\end{pro}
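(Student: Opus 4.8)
The plan is to build up the claimed factorization from the Coxeter-theoretic factorization \eqref{e:canfact2} one step at a time, checking at each step that the relevant convolution of costandard objects is again costandard (equivalently, that there is no length drop under the Bruhat order). Recall the general principle: if $y, z \in W$ satisfy $\ell(yz) = \ell(y) + \ell(z)$, then the convolution $j_{y,*} \overset I \star j_{z,*}$ of the corresponding costandard objects in the appropriate twisted Hecke categories is again the costandard object $j_{yz,*}$; this is the categorification of the braid relation and can be checked by computing the $*$-pushforward along $\mathring{I}yI \overset{I}{\times} \mathring{I}zI \to \mathring{I}yzI$, which is an isomorphism onto its image (stratum-preserving) precisely under the additivity of lengths. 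The same goes for costandard Whittaker sheaves $j_{w,*}^{\psi,\chi}$, with the bookkeeping of the $[\ell(w_\circ w)]$ shift built into \eqref{e:costwhit}. So the task is entirely combinatorial: verify the relevant length additivities, and identify $j_{w_-,*}^{w_-\chi,\chi}$ as clean.

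First I would record the length decomposition. By Lemma \ref{l:coxmin}, for $w \in \Wf$ we may write $w = w_f w_- w_\chi$ with $w_f \in W_f$, $w_-$ of minimal length in $W_f w W_\chi$, and $w_\chi \in W_\chi$ minimal in its right $w_-^{-1}W_f w_- \cap W_\chi$ coset. The standard theory of minimal double coset representatives (e.g.\ as used in \cite{qfle}) gives $\ell(w_- w_\chi) = \ell(w_-) + \ell(w_\chi)$, and since $w_-$ is minimal in its coset and $w \in \Wf$, the product with $w_f$ on the left is also length-additive in the sense relevant to the Whittaker stratification: $\ell(w_\circ w) = \ell(w_\circ w_f) + \ell(w_- w_\chi)$, using that $w_-w_\chi \in \Wf$ as well (it is minimal in $W_f w W_\chi \supseteq W_f w_- w_\chi$, hence a fortiori minimal in $W_f w_- w_\chi$). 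From these identities, convolving costandard objects along the three factors $w_f$, $w_-$, $w_\chi$ produces no length collapse, so the iterated convolution $j_{e,*}^{\psi, w_-\chi} \overset I \star j_{w_-,*}^{w_-\chi,\chi} \overset I \star j_{w_\chi,*}^{\chi,\chi}$ is $j_{w,*}^{\psi,\chi}$ up to the correct shift. (The object $j_{e,*}^{\psi, w_-\chi}$ absorbs the $w_f$-part: it is the costandard Whittaker sheaf supported on the open $LN$-orbit, which by cleanness of the thick Whittaker sheaf — or directly, since $w_\circ e$ is the minimal element of its $W_f$-coset — accounts for the $w_f$ factor after applying $\on{Ad}$.)

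The cleanness of $j_{w_-,*}^{w_-\chi,\chi}$ is the one point needing a genuine argument rather than bookkeeping. Here I would use that $w_-$ is the \emph{minimal} length element of the double coset $W_f w_- W_\chi$: this is exactly the condition guaranteeing that the stratum $\mathring{I} w_- I$, viewed inside the partial flag variety $LG/I$ with the left $LN$ (or $\mathring I$)-action and the monodromy datum determined by $\chi$ on the right, is a \emph{clean} stratum — the $!$-extension and $*$-extension of the rank-one local system agree, i.e.\ $j_{w_-,!}^{w_-\chi,\chi} \simeq j_{w_-,*}^{w_-\chi,\chi}$. The mechanism is the standard one for cleanness in the twisted setting: $w_-$ minimal in $W_f w_- W_\chi$ means that for every simple reflection $s \in W_f$ with $s w_- > w_-$, and every simple reflection $s' \in W_\chi$ with $w_- s' > w_-$, the relevant boundary strata contribute nothing to the extension because the monodromy (coming from $\chi$, resp.\ its $w_-$-translate) is nontrivial along the corresponding $\mathbb{G}_m$-direction — the cohomology of $\mathbb{G}_m$ with a nontrivial rank-one local system vanishes. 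I would phrase this as: the obstruction to cleanness of $j_{w_-,*}$ is a direct sum of contributions indexed by the codimension-one boundary strata $\mathring I w I$ with $w = s w_-$ or $w = w_- s'$ for $s, s'$ simple, each of which carries a $(\mathring I, \text{triv})$-versus-$(\chi)$ incompatibility forcing the relevant local cohomology to vanish; by minimality of $w_-$ in the double coset these are the only strata that could contribute, and all do so trivially.

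\textbf{Main obstacle.} The routine part — length additivity and the resulting convolution identity — is bookkeeping, and absorbing the $w_f$ factor into $j_{e,*}^{\psi, w_-\chi}$ is a matter of matching normalizations of shifts. The real content is the cleanness of $j_{w_-,*}^{w_-\chi,\chi}$: one must check carefully that \emph{minimality in the $W_f$--$W_\chi$ double coset} (not merely in one of the one-sided cosets) is precisely the combinatorial condition that kills all boundary contributions on both sides simultaneously, and that the twist $\chi$ genuinely obstructs each relevant boundary stratum. This is where the hypothesis that the subgroup \eqref{e:parsb} plays no role yet — cleanness holds for $w_-$ regardless — whereas the triviality of \eqref{e:parsb} is what will later make $w_\chi$ well-defined as a single element rather than a coset; I would be careful to keep these two uses of the double-coset combinatorics separate.
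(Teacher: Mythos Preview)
Your approach has a genuine gap in the convolution step. You invoke the ``general principle'' that if $\ell(yz) = \ell(y) + \ell(z)$ in $W$ then $j_{y,*} \overset I \star j_{z,*} \simeq j_{yz,*}$, and then claim that ``standard theory of minimal double coset representatives'' gives $\ell(w_- w_\chi) = \ell(w_-) + \ell(w_\chi)$. This length additivity is \emph{false} in general: $W_\chi$ is only a reflection subgroup of $W$, not a parabolic one, and the familiar length-additivity for minimal coset representatives does not carry over. For a concrete failure, take $W = S_3$ with simple reflections $s_1, s_2$ and $W_\chi = \{e, s_1 s_2 s_1\}$; then $w_- = s_2$ is minimal in $s_2 W_\chi$, but for $w_\chi = s_1 s_2 s_1$ one has $\ell(s_2 \cdot s_1 s_2 s_1) = \ell(s_1 s_2) = 2$ while $\ell(s_2) + \ell(s_1 s_2 s_1) = 4$. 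The paper itself flags this phenomenon explicitly after Proposition~\ref{p:twistup}: ``the appearing simple reflection $s$ of $W_\chi$ typically is not a simple reflection of the ambient group $W$ \ldots\ the underlying convolution of Schubert varieties will typically not be an isomorphism.'' So the stratum-level isomorphism you rely on simply does not hold, and one needs a different mechanism (conjugation into a genuinely simple reflection, as in Lemma~\ref{l:gotosimp} and Lemma~\ref{l:conjsimp}) to obtain the convolution identity.

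The paper's own proof avoids this by quoting two results from \cite{qfle} (Propositions 3.2.12 and 3.3.8 there) which handle, respectively, the factorization $j_{w,*}^{\psi,\chi} \simeq j_{w_-,*}^{\psi,\chi} \overset I \star j_{w_\chi,*}^{\chi,\chi}$ and the further splitting of $j_{w_-,*}^{\psi,\chi}$ together with the cleanness; those arguments do not proceed via na\"ive length additivity. Your cleanness sketch is closer to the mark in spirit, but also needs adjustment: the codimension-one boundary strata of $\mathring I w_- I$ are indexed by simple reflections of $W$ with $w_- s < w_-$ or $s w_- < w_-$, not by simple reflections of $W_\chi$ as you wrote; one then argues that either $s \notin W_\chi$ (so the $\chi$-monodromy along the corresponding $\mathbb{G}_m$ is nontrivial and the stalk vanishes) or $s \in W_\chi$ contradicts the minimality of $w_-$. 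That is the correct dichotomy, but it, too, requires more than a one-line appeal to minimality.
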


\begin{proof} We will obtain the desired factorization in two steps. First, we exhibited an isomorphism 
\begin{equation} \label{e:fact1}
    j_{{w}, *}^{\psi, \chi}\hspace{1mm} \simeq\hspace{1mm} j_{{w}_-, *}^{\psi, \chi} \hspace{1mm}\overset I \star \hspace{1mm}j_{{w}_\chi, *}^{\chi, \chi}
\end{equation}
in Proposition 3.2.12 of \cite{qfle}.\footnote{To aid the reader consulting {\em loc. cit.}, we note that the subgroups $N$ and $N^-$ of the present paper are denoted there by $N^-$ and $N$, respectively.} Second, we may further factor the right-hand side via an isomorphism 
\[
     j_{{w}_-, *}^{\psi, \chi} \hspace{1mm}\simeq  \hspace{1mm}j_{e, *}^{\psi, w_- \chi}  \hspace{1mm}\overset I \star  \hspace{1mm}j_{{w}_-, *}^{w_- \chi, \chi},
\]
which along with the cleanness of $j_{{w}_-, *}^{w_- \chi, \chi}$ we recorded in {\em loc. cit.} as Proposition 3.3.8. \end{proof}

\subsection{} Explicitly, if we combine Remark \ref{r:mred}, Corollary \ref{c:shiftds} and  Proposition \ref{p:3steps}, we obtain the following.

\begin{cor} \label{c:3steps}Fix an element $w \in \Wf$ along with a coset representative $\dot{w}$. Then, for any character $\chi$, the spectrally flowed Drinfeld--Sokolov reduction 
\[
    \Psi^{\dot{w}}: \gk\mod^{I, \chi} \rightarrow \sW\mod
\]
is naturally isomorphic to the composition
\[
      \gk\mod^{I, \chi} \xrightarrow{ j_{{w}_\chi, *}^{\chi, \chi} } \gk\mod^{I, \chi} \xrightarrow{ j_{{w}_-, *}^{w_-\chi, \chi}} \gk\mod^{I, w_-\chi} \xrightarrow{\Psi^-} \sW\mod,
\]
where $w_\chi$ and $w_-$ are as in Proposition \ref{p:3steps}. That is, the spectrally flowed reduction is a composition of intertwining functors and the minus reduction.
\end{cor}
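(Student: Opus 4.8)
The plan is to deduce this from the factorization already in hand, so that the corollary reduces to bookkeeping with convolution functors. First I would recall that, by the discussion preceding Proposition~\ref{l:kg}, the functor \eqref{e:gminusred} --- that is, the restriction of $\mathring{\Psi}^{\dot w}$ to $\gk\mod^{I,\chi}$ --- is the instance for $\sC=\gk\mod$ of the composition \eqref{e:catcomp}, hence is given by convolution with an object of $D_\kappa(LN,\psi\backslash LG/I,\chi)$. Passing to the cohomological shift that defines $\Psi^{\dot w}$ and invoking Corollary~\ref{c:shiftds} with $g=\dot w$ (noting $LN\dot w I=LNwI$ since $\dot w$ lifts $w\in\Wf$), this object is identified with the costandard Whittaker sheaf $j_{w,*}^{\psi,\chi}$, so that $\Psi^{\dot w}|_{\gk\mod^{I,\chi}}\simeq j_{w,*}^{\psi,\chi}\overset{I}{\star}(-)$.

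Next I would substitute the factorization of Proposition~\ref{p:3steps},
\[
 j_{w,*}^{\psi,\chi}\;\simeq\;j_{e,*}^{\psi,w_-\chi}\;\overset{I}{\star}\;j_{{w}_-,*}^{w_-\chi,\chi}\;\overset{I}{\star}\;j_{{w}_\chi,*}^{\chi,\chi},
\]
and use the associativity and unitality of the convolution action of the affine Hecke categories on the equivariant categories $\sC^{H,\phi}$ (Section~\ref{ss:catacts}) to rewrite convolution with the right-hand side as the composite
\[
 \gk\mod^{I,\chi}\xrightarrow{\,j_{{w}_\chi,*}^{\chi,\chi}\,}\gk\mod^{I,\chi}\xrightarrow{\,j_{{w}_-,*}^{w_-\chi,\chi}\,}\gk\mod^{I,w_-\chi}\xrightarrow{\,j_{e,*}^{\psi,w_-\chi}\,}\sW\mod .
\]
Here the first arrow stays inside the block $\gk\mod^{I,\chi}$ because $w_\chi\in W_\chi$ fixes $\chi$, so that $j_{{w}_\chi,*}^{\chi,\chi}$ lies in $D_\kappa(I,\chi\backslash LG/I,\chi)$; the monodromies on the remaining terms are exactly the ones written in the statement. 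Finally I would identify the last functor with the minus reduction: by Example~\ref{e:pmareint} --- that is, Remark~\ref{r:mred} together with Corollary~\ref{c:shiftds} for $g=e$ --- convolution with $j_{e,*}^{\psi,w_-\chi}$ on $\gk\mod^{I,w_-\chi}$ computes $\Psi^e\simeq\Psi^-$ restricted to that block. Concatenating the three identifications then gives the claimed isomorphism.

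Since Corollary~\ref{c:shiftds} and Proposition~\ref{p:3steps} carry all of the substantive content, I do not anticipate a genuine obstacle; the two points to handle with care are bookkeeping. The first is that the cohomological shifts must absorb correctly: the shift $[\ell(w_\circ w)]$ relating $\mathring{\Psi}^{\dot w}$ to $\Psi^{\dot w}$ is exactly the shift built into $j_{w,*}^{\psi,\chi}$ through \eqref{e:costwhit}, and likewise for $\Psi^-$ versus $\mathring{\Psi}^e$, so that Proposition~\ref{p:3steps}, being a strict isomorphism of already-shifted kernels, makes the two sides agree with no further adjustment. The second is tracking which block --- i.e.\ which $W$-translate of $\chi$ --- each intermediate functor acts on; this is governed by the left monodromy of the corresponding costandard object, and is arranged for precisely this purpose in Sections~\ref{ss:twaffheck}--\ref{ss:twwhitaff}.
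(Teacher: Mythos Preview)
Your proposal is correct and follows essentially the same approach as the paper, which simply states that the corollary is obtained by combining Remark~\ref{r:mred}, Corollary~\ref{c:shiftds}, and Proposition~\ref{p:3steps}. Your write-up spells out the bookkeeping with shifts and monodromies that the paper leaves implicit, but the logical structure is identical.
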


In particular, taking $w = w_\circ e^{\crho}$, the above rewrites the plus reduction via intertwining functors and the minus reduction, cf. Example \ref{e:pmareint}. Let us now apply this to our motivating problem.  

\section{The Frenkel--Kac--Wakimoto conjecture}
\label{s:fkw}

In this section, we calculate the spectrally flowed reductions of certain irreducible highest weight modules. The structure is as follows. Bearing in mind the factorization of Corollary \ref{c:3steps}, we first establish some preliminary assertions about the behavior of the appearing intertwining functors. With these in hand, the main result is obtained in Theorem \ref{t:maintheorem5}.

\subsection{} Let us begin by establishing notation. Suppose that $\lambda \in \ft^*$ is a dominant weight at level $\kappa$. That is, we assume the Verma module $M_\lambda$ is projective in the Category $\OO$ of $\gk$-modules. By the theorem of Kac--Kazhdan on singular vectors \cite{kk}, this is equivalent to the condition that $\kappa$ is noncritical and for every positive real affine coroot $\halpha$ one has 
\[
    \langle\hspace{.3mm} \halpha, \lambda \rangle \notin \mathbb{Z}^{< 0},
\]
cf. Section \ref{sss:intwylgroup} for a discussion of the appearing pairing. 

 We would like to calculate the spectrally flowed Drinfeld--Sokolov reductions of the corresponding irreducible module
\[
     \Psi^{\dot{w}} (L_\lambda), \quad  \quad \text{for } w \in \Wf, 
\]
where $\Psi^{\dot{w}}$ is as in Section \ref{ss:onlyondoublecoset}.

\subsection{} To do so, write $\chi$ for the character sheaf on $I$ associated to the root lattice coset $\lambda + \Lambda \in \ft^* / \Lambda$, so that 
\[
    L_\lambda \in \gk\mod^{I, \chi}. 
\]
Recall from Corollary \ref{c:3steps} the associated factorization 
\begin{equation} \label{e:e3steps}
   \Psi^{\dot{w}} \simeq \Psi^- \circ j_{{w}_-, *}^{w_- \chi, \chi} \circ j_{{w}_\chi, *}^{\chi, \chi}.
\end{equation}
We now collect some facts about the behavior of the first steps, i.e. the intertwining functors.

\subsection{} It will be convenient to begin with the middle factor $j_{{w}_-, *}^{w_- \chi, \chi}$. Recalling it is clean, we will control it as follows. 

\begin{pro}\label{p:cleanconv} Fix a character sheaf $\phi \in \ft^* / \Lambda$ and an element $w \in W$. Suppose that the associated intertwining integral 
$
     j_{w, *}^{w \phi, \phi}
$
is clean. Then the corresponding convolution functor 
\[
  j_{w, *}^{w \phi, \phi}:  \gk\mod^{I, \phi} \rightarrow \gk\mod^{I, w \phi} 
\]
is $t$-exact, and for any $\mu \in \phi + \Lambda$ there are isomorphisms 
\begin{equation} \label{e:cleanconv}
      j_{w, *}^{w\phi, \phi} \sI M_{\mu} \simeq M_{ w \mu} \quad \text{and} \quad j_{w, *}^{w\phi, \phi} \sI L_{\mu} \simeq L_{ w \mu}.
\end{equation}

\end{pro}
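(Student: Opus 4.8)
The plan is to induct on $\ell(w)$, after first converting the cleanness hypothesis into a purely Coxeter-theoretic condition. Cleanness of $j_{w,*}^{w\phi,\phi}$ implies that $w$ has minimal length in the coset $wW_\phi$ --- the monodromic shadow of the fact that a $*$-extension picks up no costalk across a wall along which the relevant monodromy is nontrivial --- and in fact cleanness is equivalent to this minimality, so nothing is lost by proving the conclusions of the proposition for \emph{every} $w$ minimal in $wW_\phi$; the bookkeeping behind this equivalence should be available from \cite{qfle}.

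For the inductive step, fix such a $w$ with $\ell(w)\geq 1$, choose a reduced expression, and split off its first letter, writing $w = s\,w'$ with $\ell(w')=\ell(w)-1$. A short Bruhat-order check --- which I would carry out --- shows that $w'$ is again minimal in $w'W_\phi$ and that $s\notin W_{w'\phi}$ (otherwise $w\in w'W_\phi$, contradicting uniqueness of the minimal element of a coset), so that $j_{w',*}^{w'\phi,\phi}$ and $j_{s,*}^{s w'\phi,\,w'\phi}$ are both clean. Since costandard objects in the twisted affine Hecke category compose along reduced expressions --- exactly as used in the proof of Proposition \ref{p:3steps} --- convolution with $j_{w,*}^{w\phi,\phi}$ decomposes as
\[
   j_{w,*}^{w\phi,\phi}\sI(-)\;\simeq\;\big(j_{s,*}^{s w'\phi,\,w'\phi}\sI(-)\big)\circ\big(j_{w',*}^{w'\phi,\phi}\sI(-)\big).
\]
By the inductive hypothesis the right factor is a $t$-exact equivalence $\gk\mod^{I,\phi}\to\gk\mod^{I,w'\phi}$ sending $M_\mu\mapsto M_{w'\mu}$ and $L_\mu\mapsto L_{w'\mu}$. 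Hence everything reduces to the base case of a clean simple-reflection intertwining functor, together with the remark that the dot action of $W$ on $\ft^*$ is a left action, so the parameters compose correctly: $s(w'\mu)=(sw')\mu=w\mu$.

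For the base case, let $s$ be a simple reflection with $j_{s,*}^{s\theta,\theta}$ clean, i.e. $s\notin W_\theta$. Convolution with $j_{s,*}^{s\theta,\theta}$ is, up to the normalization built into the costandard convention, Arkhipov's twisting functor $T_s$ on the relevant blocks of $\gk\mod^{I,\theta}$ (cf. \cite{ark}, \cite{bb82}); cleanness, i.e. the identification of $j_{s,*}^{s\theta,\theta}$ with the corresponding standard object, exhibits this functor simultaneously as a right $t$-exact twisting functor and a left $t$-exact completion functor, hence $t$-exact, and as an equivalence onto $\gk\mod^{I,s\theta}$. Since $s\notin W_\mu$ for every $\mu\in\theta+\Lambda$, there is no homological correction term and one checks directly --- e.g. from Arkhipov's explicit presentation of $T_s$ --- that $T_sM_\mu\simeq M_{s\mu}$, the only a priori ambiguity being an overall cohomological shift, which the normalization rules out. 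Applying the exact equivalence $T_s$ to $0\to N_\mu\to M_\mu\to L_\mu\to 0$, with $N_\mu\subset M_\mu$ the maximal submodule, then exhibits $T_sL_\mu$ as a simple quotient of $M_{s\mu}$, forcing $T_sL_\mu\simeq L_{s\mu}$. Composing these simple-reflection equivalences along the reduced word yields the proposition, and cleanness also identifies $j_{w,*}^{w\phi,\phi}$ with $j_{w,!}^{w\phi,\phi}$, so the statement is insensitive to which is used.

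The part I expect to be the main obstacle is precisely this base case: transporting the non-integral twisting-functor calculus --- $t$-exactness, invertibility, and the on-the-nose identity $T_sM_\mu\simeq M_{s\mu}$ with no shift --- into the present language of $\kappa$-twisted $D$-modules on the affine flag manifold, with the normalizations of Section \ref{ss:twaffheck} matched precisely so that no spurious cohomological shift creeps in. A secondary, more combinatorial point to nail down is that cleanness of $j_{w,*}^{w\phi,\phi}$ is equivalent to minimality of $w$ in $wW_\phi$ and that this minimality is inherited by $w'$ upon peeling off a reduced-word letter; both of these I expect to follow from, or to be recorded in, \cite{qfle}.
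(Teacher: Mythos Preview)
Your proposal is correct and shares the paper's overall architecture---factor the clean intertwining operator into simple-reflection pieces and treat those directly---but the execution at both key steps differs from the paper's in ways worth noting.

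For the reduction step, you pass through a Coxeter-theoretic characterization (clean $\Leftrightarrow$ $w$ minimal in $wW_\phi$), deferred to \cite{qfle}, and peel off a simple reflection on the \emph{left}. The paper instead peels off on the \emph{right}, writing $w>ws$ and $j_{w,*}^{w\phi,\phi}\simeq j_{ws,*}^{w\phi,s\phi}\sI j_{s,*}^{s\phi,\phi}$, and argues cleanness of both factors directly: if $j_{s,*}^{s\phi,\phi}$ were not clean one would have $s\phi=\phi$ and a surjection $j_{s,*}^{\phi,\phi}\twoheadrightarrow j_{e,*}^{\phi,\phi}$, which after convolving with $j_{w,*}^{w\phi,\phi}$ contradicts its cleanness; cleanness of the remaining factor then drops out of the standard convolution identity recorded as Lemma~\ref{l:convs}. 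This avoids invoking the minimality characterization altogether and stays internal to the paper.

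For the base case, you appeal to Arkhipov's twisting functors and the non-integral identity $T_s=G_s$ to obtain $t$-exactness and $T_sM_\mu\simeq M_{s\mu}$. The paper instead uses $D(P)$-equivariance of parabolic induction from the rank-one Levi attached to $s$ to reduce the Verma computation to $\mathfrak{m}$ of semisimple rank one; there the relevant categories are semisimple (precisely because $s\notin W_\phi$), so the identity $j_{s,*}^{s\phi,\phi}\sI M_\mu\simeq M_{s\mu}$ follows from a character check. The $t$-exactness is then deduced \emph{a posteriori} from this Verma identity (Vermas detect the $t$-structure), and the simple-module statement follows because $j_{s,*}^{s\phi,\phi}\sI(-)$ is an equivalence. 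The paper's route is more self-contained within its geometric setup and sidesteps the normalization-matching you flag as the main obstacle; your route is perfectly legitimate but imports more from the algebraic twisting-functor literature.
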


\begin{re} It formally follows that the convolution also exchanges the contragredient Verma modules.
\end{re}

 \begin{proof}[Proof of Proposition \ref{p:cleanconv}] We first reduce the assertion to the case where $w$ is a simple reflection. If $w$ has length zero, the assertion is clear. If $w$ has length greater than one, then $w > ws$ for some simple reflection $s$, and we have a corresponding factorization 
\begin{equation}  \label{e:conv2step}
    j_{w, *}^{w\phi, \phi} \hspace{1mm} \simeq \hspace{1mm}j_{ws, *}^{w \phi, s\phi} \hspace{1mm}\overset I \star \hspace{1mm} j_{s, *}^{s\phi, \phi}.
\end{equation}

We claim that  both factors of the right-hand side are again clean. We first show this for $j_{s, *}^{s\phi, \phi}$. Indeed, if not, we would have $s\phi = \phi$ and a surjection $j_{s, *}^{\phi, \phi} \twoheadrightarrow j_{e, *}^{\phi, \phi}$. By convolving this with $j_{w, *}^{w \phi, \phi}$, by the right exactness of convolution with costandard objects we would obtain a surjection 
\[
    j_{w, *}^{w \phi, \phi}  \twoheadrightarrow j_{ws, *}^{w \phi, \phi},
\]
which would contradict the cleanness of $j_{w, *}^{w \phi, \phi}$.

To proceed, we will use the following basic lemma.

\begin{lemma} \label{l:convs}Suppose $s$ is a simple reflection of $W,$ and $\phi$ is a character sheaf such that the object $j_{s, *}^{s\phi, \phi}$ is clean. Then for any $w \in W$ there are isomorphisms 
\begin{equation} \label{e:convenienttruth}
     j_{w, !}^{w \phi, s\phi} \sI j_{s,!}^{s\phi, \phi} \hs \simeq \hs j_{ws, !}^{w \phi, \phi} \quad \text{and} \quad  j_{w, *}^{w \phi, s\phi} \sI j_{s,*}^{s\phi, \phi} \hs \simeq \hs j_{ws, *}^{w \phi,  \phi}
\end{equation}
That is, convolution with $j_{s, *}^{s\phi, \phi}$ sends (co)standard objects to (co)standard objects. 
\end{lemma}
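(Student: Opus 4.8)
The plan is to prove Lemma \ref{l:convs} by induction on the length of $w$, reducing everything to the braid-type relations for costandard objects in the twisted affine Hecke category and the hypothesis that $j_{s,*}^{s\phi,\phi}$ is clean. The base case $\ell(w) = 0$ is immediate, since then $w$ is length-zero and convolution with $j_{w,*}^{w\phi,s\phi}$ is an invertible functor taking (co)standard objects to (co)standard objects by construction; combined with cleanness it gives both isomorphisms of \eqref{e:convenienttruth} directly. For the inductive step, the key dichotomy is whether $ws > w$ or $ws < w$ in the Bruhat order on $W$.

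If $ws > w$, then $\ell(ws) = \ell(w) + 1$ and one has the standard factorizations $j_{ws,!}^{w\phi,\phi} \simeq j_{w,!}^{w\phi,s\phi} \sI j_{s,!}^{s\phi,\phi}$ and $j_{ws,*}^{w\phi,\phi} \simeq j_{w,*}^{w\phi,s\phi} \sI j_{s,*}^{s\phi,\phi}$, which are exactly the assertions of the lemma in this case; these hold in the universal (untwisted-monodromy) Hecke category and descend, so nothing about cleanness is even needed here. The real content is the case $ws < w$. Here I would write $w = (ws)s$ with $\ell(w) = \ell(ws)+1$, so that by the previous paragraph $j_{w,!}^{w\phi,s\phi} \simeq j_{ws,!}^{w\phi,\phi} \sI j_{s,!}^{\phi, s\phi}$ — wait, one must be careful with which monodromies appear. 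The cleaner route: when $ws < w$, convolve the target identity with $j_{s,*}^{s\phi,\phi}$ on the right and use that, because $j_{s,*}^{s\phi,\phi}$ is clean, it is invertible with inverse $j_{s,*}^{\phi, s\phi}$ (equivalently $j_{s,!}^{s\phi,\phi}$, since $s\phi = \phi$ would force a non-clean situation only if the objects differed). Concretely: cleanness of $j_{s,*}^{s\phi,\phi}$ means $j_{s,*}^{s\phi,\phi} \simeq j_{s,!}^{s\phi,\phi}$, and since $j_{s,!}^{s\phi,\phi} \sI j_{s,*}^{\phi,s\phi} \simeq j_{e,*}^{s\phi,s\phi}$ always, convolution with $j_{s,*}^{s\phi,\phi}$ is an equivalence $\gk\mod^{I,\phi} \to \gk\mod^{I,s\phi}$ with inverse convolution by $j_{s,*}^{\phi,s\phi}$. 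So the $ws<w$ case of \eqref{e:convenienttruth} is obtained from the $ws>w$ case (applied to $ws$ in place of $w$, which has strictly smaller length and satisfies $(ws)s = w > ws$) by convolving with this equivalence and invoking the induction hypothesis that convolution with $j_{s,*}^{s\phi,\phi}$ and its inverse send (co)standard objects to (co)standard objects — which is precisely the content of the clean, length-one case already in hand.

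I would organize the write-up so that the genuinely new input — cleanness forcing invertibility of $j_{s,*}^{s\phi,\phi}$ as an endo-equivalence of the relevant blocks, and hence that it and its inverse permute (co)standard objects — is isolated as the first observation, after which the inductive bookkeeping with lengths and monodromy labels is routine. The main obstacle I anticipate is purely notational: keeping straight the twists $\phi, s\phi, w\phi$ on each factor through the associativity isomorphisms, and verifying in the $ws < w$ branch that the length hypothesis genuinely decreases so the induction is well-founded (it does, since we apply the hypothesis to $ws$ and to $s$, both shorter than $w$). There is no deep geometric input beyond the cleanness hypothesis and the already-recorded braid relations for costandard objects in $D_\kappa(\mathring I \backslash LG / I, \chi)$; the essential point is that a clean intertwining object at a simple reflection behaves like an honest invertible generator of a braid group action, so it transports the basis of (co)standard objects to itself.
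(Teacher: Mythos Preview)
Your proposal is correct and matches the paper's own proof almost exactly: split into the cases $w < ws$ and $w > ws$, handle the first by the geometric fact that the convolution map $\mathring{I}wI \overset I \times IsI \to \mathring{I}wsI$ is an isomorphism (no cleanness needed), and deduce the second by using that cleanness makes $-\sI j_{s,*}^{s\phi,\phi}$ inverse to $-\sI j_{s,!}^{\phi,s\phi}$, then applying the first case to $ws$. The only cosmetic difference is that you wrap the argument in an induction on $\ell(w)$, which is unnecessary since the two cases already cover every $w$ directly; the paper's proof is accordingly one short paragraph.
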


\begin{re} This lemma appears, {\em mutatis mutandis}, in the study of finite twisted Hecke categories performed in \cite{ly}, and likely earlier as well. A similar analysis of the  affine twisted Hecke categories, which refines certain results proved in the remainder of the present paper, as well as its applications, e.g. to the tamely ramified local quantum Langlands correspondence, will appear in forthcoming work of the author and Y. W. Li, Z. Yun, and X. Zhu. \end{re}

\begin{proof}[Proof of Lemma \ref{l:convs}] If $w < ws$, then the both assertions of Equation \eqref{e:convenienttruth} are well known to hold without assumptions of cleaness, and follow  from the fact that 
\[
      \mathring{I}wI \overset I \times IsI \rightarrow \mathring{I}wsI
\]
is an isomorphism. The claims for $w > ws$ then follow from the fact that  $ - \sI j_{s, *}^{s\phi,  \phi}$ is inverse to $ - \sI j_{s, !}^{\phi,  s\phi}$, and the cleanness of the latter.  
\end{proof}

 With the lemma in hand, the asserted cleanness of $j_{ws, *}^{w \phi, s \phi}$ follows from   \eqref{e:conv2step}.  
 
 Iterating the factorization \eqref{e:conv2step}, it follows that $j_{w, *}^{w \phi, \phi}$ may be written as a convolution of clean costandard objects associated to simple reflections and a length zero costandard object. As the claims of the proposition are compatible with composition, this completes the reduction to the case of $w$ being a simple reflection $s$.

  In this case, write $P$ for the corresponding (minimal) standard parahoric subgroup of $LG$. We first argue that for any $\mu \in \phi + \Lambda$ one has 
 \begin{equation} \label{e:convverma}
     j_{s, *}^{s\phi, \phi} \sI M_{\mu} \simeq M_{s \mu}.
 \end{equation}
 To see this, write $M$ and $\mathfrak{m}$ for the corresponding Levi factors of $LG$ and $\gk$, respectively, and note these are of semisimple rank one. By the $D(P)$-equivariance of parabolic induction
 \[
   \on{pind}: \mathfrak{m}\mod \rightarrow \gk\mod,
 \]
see for example Section 2.5.4 of \cite{ahc}, the assertion \eqref{e:convverma} reduces to the analogous assertion for $\mathfrak{m}$. To see the latter, write $B^s$  for the opposite Borel of $P$ containing $T$, and $N^s$ for its prounipotent radical. If we pick any lift $\dot{s}$ of $s$ to $N_{M}(T)$, we may identify $j_{s, *}^{s\phi, \phi}$ with the composition
\begin{equation} \label{e:intop}
    \mathfrak{m}\mod^{B, \phi} \xrightarrow{\on{Av}_{N^s, *}[1]} \mathfrak{m}\mod^{B^s, s \phi} \xrightarrow{ \delta_{\dot{s}} } \mathfrak{m}\mod^{B, s\phi}.   
\end{equation}
The remainder follows from the highest weight representation theory of $\fsl_2$. Briefly, by our assumptions on $\phi$ and $s\phi$, the appearing categories are semisimple, so one is reduced to a straightforward verification at the level of characters.

   It remains to prove the $t$-exactness of $j_{s, *}^{s\phi, \phi}$ and the interchanging of simple modules. But recall that an object $N$ of $\gk\mod^{I, s\phi}$ lies in cohomological degrees $\geqslant 0$ if and only if 
   $$
      \on{Hom}(M_\nu, N) \xrightarrow{\sim} \tau^{\geqslant 0}\on{Hom}(M_\nu, N)$$$ \text{for all $\nu$ in $s\phi + \Lambda$}.
  $
  Therefore, the claimed $t$-exactness of $j_{s, *}^{s \phi, \phi}$ follows from \eqref{e:convverma}. Moreover, as $j_{s, *}^{s \phi, \phi}$ is further an equivalence, it also exchanges the simple quotients of the Verma modules, as desired.  \end{proof}

\subsection{} We next gather some facts which will be of use in controlling the final intertwining integral on the right-hand side of \eqref{e:e3steps}, namely $j_{w_\chi, *}^{\chi, \chi}$. 

 Recall that $W_\chi$ is a Coxeter group, and in particular carries a distinguished set of simple reflections and a Bruhat order. We will need the following assertion. 
\begin{pro} Fix elements $y, s \in W_\chi$ such that $s$ is a simple reflection of $W_\chi$ and $y < ys$. There exists an isomorphism of costandard objects 
\[
       j_{ys, *}^{\chi, \chi} \hs \simeq \hs j_{y, *}^{\chi, \chi} \sI j_{s, *}^{\chi, \chi}.   
\]
\label{p:twistup}
\end{pro}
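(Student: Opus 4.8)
The statement is the affine analogue of the "$\ell(ys) = \ell(y) + 1$ implies costandards multiply" relation, now in the bi-$\chi$-monodromic Hecke category $D_\kappa(I,\chi \backslash LG / I,\chi)$ and indexed by the Coxeter system $W_\chi$ rather than $W$. The subtlety — and the whole point of working $\chi$-equivariantly — is that a simple reflection $s$ of $W_\chi$ need not be a simple reflection of the ambient affine Weyl group $W$, so one cannot simply quote the geometry of the stratification of $LG$ by $\mathring{I}$-double cosets. So the plan is to reduce to the case where $s$ \emph{is} a simple reflection of $W$ by passing through a parahoric and using the factorization of costandard objects.

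\emph{Step 1: Reduce $s$ to a simple reflection of $W$.} Since $s \in W_\chi$, its reflection $s = s_{\check\alpha}$ for some positive integral real affine coroot $\check\alpha$ of $\chi$; write $s_{\check\alpha}$ as a conjugate $v^{-1} s_0 v$ with $s_0$ a simple reflection of $W$ and $v$ of minimal length, so $\ell(s_{\check\alpha}) = 2\ell(v) + 1$ in $W$. The key combinatorial input is that the hypothesis $y < ys$ in the Bruhat order of $W_\chi$ forces the corresponding length-additivity in $W$: i.e. one may choose reduced expressions so that the $*$-extension $j_{ys,*}^{\chi,\chi}$ decomposes along a chosen reduced word. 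Concretely, I would appeal to the same reduced-expression bookkeeping already used in the proof of Proposition \ref{p:cleanconv}: iterate the factorization \eqref{e:conv2step}, now inside $W_\chi$, to write $j_{ys,*}^{\chi,\chi}$ and $j_{y,*}^{\chi,\chi}$ as convolutions of costandard objects attached to simple reflections of $W_\chi$, and then each such building block attached to a simple reflection of $W_\chi$ as a convolution of costandard objects attached to simple reflections of $W$ together with appropriate length-zero objects (using $s_{\check\alpha} = v^{-1}s_0 v$ with length-additivity of the expression $v^{-1}s_0 v$ as an element of $W$ — this is exactly the statement that $\check\alpha$ is a positive real root). This reduces the claim to the well-known fact that for a simple reflection $\sigma$ of $W$ and $u < u\sigma$ in $W$, one has $j_{u\sigma,*}^{u\sigma \phi, \phi} \simeq j_{u,*}^{u\sigma\phi,\sigma\phi} \sI j_{\sigma,*}^{\sigma\phi,\phi}$, which follows from the isomorphism $\mathring{I}uI \overset{I}\times I\sigma I \xrightarrow{\sim} \mathring{I}u\sigma I$, exactly as recorded in the proof of Lemma \ref{l:convs}.

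\emph{Step 2: Track the twisting characters.} Throughout Step 1 one must verify that all the intermediate character sheaves that appear are consistent with the ambient $W$-action on $\ft^*/\Lambda$ and that the conjugation $s_{\check\alpha} = v^{-1}s_0 v$ is compatible with the twists: since $s_{\check\alpha}$ fixes $\chi$, the intermediate characters $v\chi$, $s_0 v\chi$, etc. are unambiguous, and the length-zero cancellations at the ends of the word $v^{-1} s_0 v$ pair off $!$- and $*$-extensions of length-zero costandards, which are mutually inverse and $t$-exact — this is the same mechanism as in Lemma \ref{l:convs}, where $-\sI j_{s,*}^{s\phi,\phi}$ is inverse to $-\sI j_{s,!}^{\phi,s\phi}$ when the relevant object is clean; here the length-zero objects are automatically clean. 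Assembling the associativity of $\sI$ then collapses the product to $j_{y,*}^{\chi,\chi} \sI j_{s,*}^{\chi,\chi}$.

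\emph{Main obstacle.} The genuinely delicate point is Step 1: one must be sure that $y < ys$ \emph{in $W_\chi$} really does produce a reduced expression for $ys$ in the ambient group $W$ compatible with the chosen reduced expression for $y$, so that no unexpected cancellation occurs when the blocks attached to simple reflections of $W_\chi$ are expanded into the ambient simple reflections. Equivalently, one needs that the "lift" of the length function of $W_\chi$ behaves submultiplicatively with respect to $\ell$ on $W$ along this particular product — this is true because $W_\chi$ is realized as a reflection subgroup with its own root system inside that of $W$ (every positive root of $W_\chi$ is a positive root of $W$), but it must be invoked carefully. Once that is in place, the rest is a bookkeeping exercise with the already-established factorization lemmas, and I would expect to cite Lemma \ref{l:convs} and the combinatorics of reflection subgroups rather than reprove anything.
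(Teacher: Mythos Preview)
Your overall strategy---reduce to the case where $s$ is a simple reflection of the ambient affine Weyl group $W$---is exactly right, and it is what the paper does. But your mechanism for the reduction has a genuine gap. You want to expand $j_{y,*}^{\chi,\chi}$, $j_{s,*}^{\chi,\chi}$, and $j_{ys,*}^{\chi,\chi}$ along reduced expressions in $W$ and then match them up; for this you need $\ell_W(ys) = \ell_W(y) + \ell_W(s)$, and you claim this follows from $y < ys$ in $W_\chi$ because positive roots of $W_\chi$ are positive roots of $W$. That implication is false. The inversion set $N_W(s)$ of a non-simple reflection $s$ contains many positive roots \emph{not} in $\check\Phi^+_\chi$, and $y$ can flip some of those without violating $y < ys$ in $W_\chi$. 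Concretely, take $W = S_4$, $W_\chi = \langle (13),(24)\rangle$, $y = (13)$, $s = (24)$: then $y < ys$ in $W_\chi$, but $\ell_W(y) = \ell_W(s) = 3$ while $\ell_W(ys) = 4 \neq 6$. So the concatenated word is not reduced in $W$ and the factorization you want does not come for free. (Your references to ``length-zero cancellations'' in Step 2 also do not make sense here: the conjugating element $v$ has positive length.)

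The paper avoids this by choosing the conjugating element much more carefully. Instead of the shortest $v$ with $s = v^{-1} s_0 v$, it builds $z$ one simple reflection of $W$ at a time, at each step choosing $u$ with $\check\alpha > u(\check\alpha) > 0$; because $\check\alpha$ is simple in $\check\Phi^+_\chi$, this forces $u \notin W_\chi$, so every $j_{u,*}^{u\chi,\chi}$ along the way is \emph{clean}. By Lemma~\ref{l:convs}, convolution by clean costandards sends (co)standards to (co)standards regardless of relative length, so the composite conjugation
\[
j_{z,*}^{\chi,z\chi} \sI (-) \sI j_{z^{-1},*}^{z\chi,\chi}
\]
is a genuine monoidal equivalence taking costandards to costandards. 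This transports the whole identity to $W_{z\chi}$, where $t = z^{-1}sz$ is now simple in $W$. There the needed length additivity \emph{does} hold: since $z$ carries $\check\Phi^+_\chi$ bijectively onto $\check\Phi^+_{z\chi}$, the condition $y < ys$ in $W_\chi$ becomes $(z^{-1}yz)(\check\alpha_t) > 0$ in $\check\Phi^+$, which is exactly $z^{-1}yz < (z^{-1}yz)t$ in $W$. In the example above this is the passage, via $z = s_2$, from $(13)\cdot(24)$ to $s_1 \cdot s_3$, where the lengths are visibly additive.
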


We emphasize that the appearing simple reflection $s$ of $W_\chi$ typically is not a simple reflection of the ambient group $W$. This renders the proposition not entirely geometrically obvious, as the underlying convolution of Schubert varieties will typically not be an isomorphism. Before giving the proof, we note the following useful consequence. 

\begin{cor}\label{c:littlesteps} For an element $y$ of $W_\chi$,  fix a reduced expression 
\[
      y = s_{i_1} s_{i_2} \cdots s_{i_\ell}. 
\]
There exists a corresponding isomorphism of costandard objects 
\[
     j_{\dot{y}, *}^{\chi, \chi} \hs \simeq \hs j_{s_{i_1}, *}^{\chi, \chi} \hs \overset I \star \hs j_{s_{i_2}, *}^{\chi, \chi} \sI \cdots \sI  j_{s_{i_\ell}, *}^{\chi, \chi}.  
\]
\end{cor}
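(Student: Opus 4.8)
The plan is to deduce Corollary \ref{c:littlesteps} from Proposition \ref{p:twistup} by induction on the length $\ell$ of the given reduced expression. The base cases $\ell = 0$ and $\ell = 1$ are immediate. For the inductive step I would peel off the last simple reflection: set $y' := s_{i_1} s_{i_2} \cdots s_{i_{\ell - 1}}$. The key elementary observation is that a prefix of a reduced word in the Coxeter system $W_\chi$ is again reduced, so $\ell_\chi(y') = \ell - 1$, and hence $y' < y' s_{i_\ell}$ in the Bruhat order on $W_\chi$. This is precisely the hypothesis required to invoke Proposition \ref{p:twistup} with $(y, s) = (y', s_{i_\ell})$, which gives an isomorphism $j_{\dot{y}, *}^{\chi, \chi} \hs \simeq \hs j_{\dot{y}', *}^{\chi, \chi} \sI j_{s_{i_\ell}, *}^{\chi, \chi}$.

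I would then apply the inductive hypothesis to the reduced expression $y' = s_{i_1} \cdots s_{i_{\ell - 1}}$ of $y'$, obtaining $j_{\dot{y}', *}^{\chi, \chi} \hs \simeq \hs j_{s_{i_1}, *}^{\chi, \chi} \sI j_{s_{i_2}, *}^{\chi, \chi} \sI \cdots \sI j_{s_{i_{\ell - 1}}, *}^{\chi, \chi}$, and substitute this into the previous isomorphism. Invoking the associativity of the convolution monoidal structure on $D_\kappa(I, \chi \backslash LG / I, \chi)$ (which holds up to coherent homotopy, so the iterated product $j_{s_{i_1}, *}^{\chi, \chi} \sI \cdots \sI j_{s_{i_\ell}, *}^{\chi, \chi}$ is unambiguous) then yields the claimed factorization. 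Since the objects $j_{w, *}^{w\chi, \chi}$ are only defined up to a line in cohomological degree zero, no issue arises from the choice of the coset representative $\dot{y}$.

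I do not expect a genuine obstacle here: all of the substance is contained in Proposition \ref{p:twistup}, and Corollary \ref{c:littlesteps} is a formal bookkeeping consequence. The only points that require a word of care are (i) verifying at each stage that the Bruhat-order hypothesis of Proposition \ref{p:twistup} is satisfied, which is exactly the fact that truncations of reduced words remain reduced, and (ii) keeping track of associativity of $\overset I \star$. One could equally peel reflections off from the left using a left-handed mirror of Proposition \ref{p:twistup}; peeling from the right is the version that matches the statement as recorded.
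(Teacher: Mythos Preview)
Your proposal is correct and is exactly the argument the paper has in mind: Corollary \ref{c:littlesteps} is stated without proof as an immediate consequence of Proposition \ref{p:twistup}, and the induction you describe, peeling off the rightmost simple reflection using the Bruhat hypothesis $y' < y's_{i_\ell}$, is the intended deduction.
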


\begin{proof}[Proof of Proposition \ref{p:twistup}] We will reduce to the case of $s$ being also a simple reflection of $W$ using the following lemma.

\begin{lemma} \label{l:gotosimp}If $s$ is a simple reflection of $W_\chi$, then there exists elements $z$ and $t$ in $W$ such that the following hold. 
\begin{enumerate}
    \item The element $t$ is a simple reflection of $W$.
    \item One has the equality $s = ztz^{-1}$. 
    \item The action of $z$ on the real affine coroots restricts to  an isomorphism  
    \begin{equation}
          z: \check{\Phi}_\chi^+ \xrightarrow{\sim} \check{\Phi}_{z \chi}^+.  \label{e:ziscool}
    \end{equation}
    In particular, conjugation by $z$ yields an isomorphism of Coxeter systems 
    \[
               z: W_\chi \xrightarrow{\sim} W_{z \chi}. 
    \]
\end{enumerate}

\end{lemma}
\begin{proof} If the coroot $\halpha$ corresponding to $s$ is simple in $\check{\Phi}^+$ we are done. If not, there exists a simple reflection $u$ of $W$ such that 
\begin{equation} \label{e:down}
     \halpha > u(\halpha) > 0. 
\end{equation}
Since by assumption $\halpha$ is a simple coroot in $\check{\Phi}^+_\chi$, it follows that $u$ does not lie in $W_\chi$, and in particular we have that acting by $u$ restricts to an isomorphism
\[
    u: \check{\Phi}^+_\chi \xrightarrow{\sim} \check{\Phi}^+_{u\chi}. 
\]
This reduces the assertions of the lemma for the pair $(\chi, s)$ to those for $( u \chi, usu^{-1})$. By Equation \eqref{e:down} and the fact that there are only finitely many positive coroots less than $\halpha$, iterating this argument must bring us to a simple coroot of $\check{\Phi}^+$, as desired. \end{proof}

 Let $s$ be as in the statement of the proposition, and let $t$ and $z$ be as in the statement of Lemma \ref{l:gotosimp}. By the proof of the lemma, in combination with Lemma \ref{l:convs}, it follows that the objects
\[
     j_{z, *}^{\chi, z \chi} \quad \text{and} \quad j_{z^{-1}, *}^{z \chi, z}
\]
are clean. In particular, the conjugation monoidal equivalence 
\[
  j_{z, *}^{\chi, z\chi} \sI - \sI j_{z^{-1}, *}^{z \chi, \chi}: D( I, z \chi \backslash LG / I, z \chi) \simeq D(I, \chi \backslash LG / I, \chi)
\]
reduces the assertion of the proposition to the case where the simple reflection $s$ of $W_\chi$ is a simple reflection of $W$, which is standard, cf. Lemma \ref{l:convs}. 
\end{proof}

Since we will use it later, we explicitly record one fact developed in the proof of Proposition \ref{p:twistup}. Namely, the following lifts Lemma \ref{l:gotosimp}, particularly statement (2) therein,  to the intertwining functors.

\begin{lemma} \label{l:conjsimp}Let $s, t, z$ be as in Lemma \ref{l:gotosimp}. Then there is an isomorphism of costandard objects 
\[
     j_{s, *}^{\chi, \chi} \hs \simeq \hs j_{z, *}^{\chi, z\chi} \sI j_{t, *}^{z\chi, z\chi} \sI j_{z^{-1}, *}^{z \chi, \chi}. 
\]
That is, up to conjugation by a clean object, any costandard object associated to a simple reflection of $W_\chi$ is a costandard object associated to a simple reflection of $W$. 
\end{lemma}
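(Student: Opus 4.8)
The statement is essentially a repackaging of material already developed in the proof of Proposition \ref{p:twistup}, so the plan is to assemble the pieces rather than prove anything genuinely new. The idea is to exhibit the stated isomorphism by composing known identities for convolution of costandard objects. First, I would recall from Lemma \ref{l:gotosimp} that we have $s = ztz^{-1}$ with $t$ a simple reflection of $W$, and — crucially — that the proof of that lemma, together with Lemma \ref{l:convs}, shows that $j_{z,*}^{\chi, z\chi}$ and $j_{z^{-1}, *}^{z\chi, \chi}$ are clean; hence the lengths are additive in the sense that $\ell(z) + \ell(t) + \ell(z^{-1}) = \ell(s) + 2\ell(z)$ when one multiplies the underlying Weyl group elements, or more precisely the relevant Bruhat-order inequalities $z < zt$ and $zt < ztz^{-1} = s$ hold along the way.

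The core computation is then: $j_{z,*}^{\chi, z\chi} \sI j_{t,*}^{z\chi, z\chi} \sI j_{z^{-1},*}^{z\chi, \chi}$ should be evaluated using Lemma \ref{l:convs} twice. Since $j_{z^{-1},*}^{z\chi, \chi}$ is clean, convolution with it sends costandard to costandard, and since conjugation by the clean pair $j_{z,*}^{\chi, z\chi} \sI - \sI j_{z^{-1},*}^{z\chi,\chi}$ is precisely the monoidal equivalence $D(I, z\chi \backslash LG / I, z\chi) \simeq D(I, \chi \backslash LG / I, \chi)$ used in the proof of Proposition \ref{p:twistup}, this equivalence carries the costandard object $j_{t,*}^{z\chi, z\chi}$ to a costandard object. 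It remains to identify which one: conjugating the Weyl group element $t$ by $z$ gives $ztz^{-1} = s$, so the monoidal equivalence sends $j_{t,*}^{z\chi, z\chi}$ to $j_{s,*}^{\chi,\chi}$, up to the ambiguity of a cohomological-degree-zero line which, as throughout Section \ref{ss:twheckecat}, we allow. Reading this off gives exactly the asserted isomorphism $j_{s,*}^{\chi,\chi} \simeq j_{z,*}^{\chi, z\chi} \sI j_{t,*}^{z\chi,z\chi} \sI j_{z^{-1},*}^{z\chi,\chi}$.

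The one point requiring a little care — and the main (modest) obstacle — is the bookkeeping of \emph{which} conjugate is a costandard object versus a standard object, i.e. tracking the $!$- versus $*$-extensions and the cohomological shifts through the two applications of Lemma \ref{l:convs}. Concretely, one uses that $j_{z^{-1},*}^{z\chi,\chi}$ clean means $j_{z^{-1},*}^{z\chi,\chi} = j_{z^{-1},!}^{z\chi,\chi}$, so it is simultaneously the inverse (under $\sI$) of $j_{z,*}^{\chi,z\chi}$, and then the identity $j_{t,*}^{z\chi,z\chi} \sI j_{z^{-1},*}^{z\chi,\chi} \simeq j_{tz^{-1},*}^{z\chi,\chi}$ (valid since $tz^{-1} < t$, i.e. right multiplication by the clean $j_{z^{-1}}$ shortens) followed by $j_{z,*}^{\chi,z\chi} \sI j_{tz^{-1},*}^{z\chi,\chi} \simeq j_{ztz^{-1},*}^{\chi,\chi} = j_{s,*}^{\chi,\chi}$ (valid since $z < z\cdot(tz^{-1})$, using again that convolution with the clean $j_{z,*}$ sends costandard to costandard and adds lengths). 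Assembling these two displayed isomorphisms and noting $ztz^{-1} = s$ completes the argument; all shifts match because each clean convolution is length-additive on the relevant pair, so no parasitic shift appears.
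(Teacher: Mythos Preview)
Your approach is essentially the paper's own: the lemma is simply recorded as a byproduct of the proof of Proposition \ref{p:twistup}, the key input being that $j_{z,*}^{\chi,z\chi}$ and $j_{z^{-1},*}^{z\chi,\chi}$ are clean (from the construction of $z$ in Lemma \ref{l:gotosimp} together with Lemma \ref{l:convs}), so that conjugation by them is a monoidal equivalence carrying costandard objects to costandard objects. The paper's commented-out proof is literally ``cleanness readily implies the lemma,'' and your first paragraph captures this.

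That said, your final bookkeeping paragraph has confused justifications. The inequality ``$tz^{-1} < t$'' is false in general (indeed $\ell(t)=1$), and ``right multiplication by the clean $j_{z^{-1}}$ shortens'' is not what is happening; likewise ``$z < z\cdot(tz^{-1})$'' is not the operative reason. The correct argument is simply that cleanness of $j_{z^{-1},*}^{z\chi,\chi}$, together with its factorization as an iterated convolution of clean simple-reflection costandards at the intermediate twists, lets you iterate Lemma \ref{l:convs} to conclude that right convolution by it sends $j_{t,*}$ to $j_{tz^{-1},*}$; similarly for left convolution by $j_{z,*}^{\chi,z\chi}$. Your displayed isomorphisms are correct, but replace the Bruhat-order justifications with this cleanness argument.
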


\subsection{} In this subsection, which may be skipped by the reader, we use what we have developed so far to obtain bounds on the cohomological amplitude of the spectrally flowed reduction functors. To our knowledge, these are new already for the plus reduction. 

 Let us collect two pieces of notation. First, we denote the standard length function on $W_\chi$ with respect to its Coxeter generators by  $$\ell_\chi: W_\chi \rightarrow \mathbb{Z}^{\geqslant 0}.$$
Second, let $\sC$ and $\sD$ be dg-categories equipped with $t$-structures and a  functor 
 $$F: \sC \rightarrow \sD.$$Fix integers $a$ and $b$ with $a \leqslant b$. Recall that
$F$ is said to have cohomological amplitude at most $[a,b]$ if $F[b]$ is right exact and $F[a]$ is left exact, i.e. 
\[
    F(\sC^{\leqslant 0}) \subset \sD^{\leqslant b} \quad \text{and} \quad F(\sC^{\geqslant 0}) \subset \sD^{\geqslant a}. 
\]
In particular, for any object $\xi$ in the heart of the $t$-structure on $\sC$, $F(\xi)$ has nonzero cohomology, with respect to the truncation functors on $\sD$, in at most cohomological degrees $a \leqslant i \leqslant b$.

\begin{theo} \label{t:cohamp}Let $w, \chi, w_\chi$ be as in Proposition \ref{p:3steps}. Then the spectrally flowed reduction functor 
\[
\Psi^{w}: \gk\mod^{I, \chi} \rightarrow \sW\mod
\]
has cohomological amplitude at most 
\begin{equation} \label{e:cohamp}
[- \ell_\chi(w_\chi), 0].
\end{equation}
\end{theo}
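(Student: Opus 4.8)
The plan is to reduce the bound on cohomological amplitude to the three-step factorization of Corollary \ref{c:3steps}, namely
\[
\Psi^{w} \simeq \Psi^- \circ j_{w_-, *}^{w_-\chi, \chi} \circ j_{w_\chi, *}^{\chi, \chi},
\]
and bound the amplitude of each factor separately, using that the amplitude of a composite is at most the (Minkowski) sum of the amplitudes. The last two factors are harmless: by Arakawa's Theorem \ref{t:araminus} the minus reduction $\Psi^-$ is $t$-exact, hence has amplitude $[0,0]$; and since $j_{w_-, *}^{w_-\chi, \chi}$ is clean by Proposition \ref{p:3steps}, Proposition \ref{p:cleanconv} shows it is $t$-exact as well, so it also has amplitude $[0,0]$. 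Thus the entire bound \eqref{e:cohamp} must come from the first factor $j_{w_\chi, *}^{\chi, \chi}$, and the theorem reduces to the claim that convolution with $j_{w_\chi, *}^{\chi, \chi}$ on $\gk\mod^{I, \chi}$ has cohomological amplitude at most $[-\ell_\chi(w_\chi), 0]$.

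The upper bound (right exactness) is the easy half: convolution with any costandard object is right exact, a fact attributed in the excerpt to Frenkel--Gaitsgory \cite{fg06loc} and already invoked in the proof of Proposition \ref{p:cleanconv}, so $j_{w_\chi, *}^{\chi, \chi}$ sends $\gk\mod^{I, \chi, \leqslant 0}$ into $\gk\mod^{I, \chi, \leqslant 0}$. For the lower bound I would induct on $\ell_\chi(w_\chi)$ using Corollary \ref{c:littlesteps}: fixing a reduced expression $w_\chi = s_{i_1} \cdots s_{i_\ell}$ in the Coxeter system $W_\chi$, one has
\[
j_{w_\chi, *}^{\chi, \chi} \simeq j_{s_{i_1}, *}^{\chi, \chi} \sI \cdots \sI j_{s_{i_\ell}, *}^{\chi, \chi},
\]
so it suffices to show that convolution with a single $j_{s, *}^{\chi, \chi}$, for $s$ a simple reflection of $W_\chi$, has amplitude at most $[-1, 0]$; the bound for $w_\chi$ then follows by composing $\ell_\chi(w_\chi)$ such functors. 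Here one must be careful: if $s\chi \neq \chi$ then $j_{s,*}^{\chi,\chi}$ is clean and even $t$-exact by Proposition \ref{p:cleanconv}, so the only substantive case is $s\chi = \chi$, i.e. $s$ genuinely lies in $W_\chi$ as an integral reflection.

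For that case I would use Lemma \ref{l:conjsimp}: up to conjugation by the clean (hence $t$-exact) objects $j_{z, *}^{\chi, z\chi}$ and $j_{z^{-1}, *}^{z\chi, \chi}$, the object $j_{s, *}^{\chi, \chi}$ is identified with $j_{t, *}^{z\chi, z\chi}$ for an honest simple reflection $t$ of the ambient affine Weyl group $W$. Conjugation by $t$-exact equivalences preserves cohomological amplitude, so it remains to bound the amplitude of convolution with $j_{t, *}^{z\chi, z\chi}$ for $t$ a simple reflection of $W$. This is the standard minimal-parahoric computation: arguing as in the end of the proof of Proposition \ref{p:cleanconv} via parabolic induction $\on{pind}: \fm\mod \to \gk\mod$ from the rank-one Levi, the problem reduces to the $\fsl_2$ (or rather, semisimple-rank-one) computation, where convolution with the costandard object is the composite $\on{Av}_{N^s,*}[1]$ followed by the translation $\delta_{\dot t}$, and one checks directly that $\on{Av}_{N^s,*}[1]$ has amplitude $[-1,0]$ on Category $\OO$ for $\mathfrak{sl}_2$; the translation is an equivalence and thus exact. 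I expect the main obstacle to be precisely this last verification done with sufficient care in the non-$t$-exact case $s\chi=\chi$: one must confirm that the single nontrivial cohomological degree of slippage is exactly $-1$ and no worse, uniformly in the weight, which is where the rank-one highest weight theory and the precise normalization of $j_{s,*}^{\chi,\chi}$ (including the shift $[\ell(w)]$ in its definition) enter. Everything else is bookkeeping with amplitudes of composites and transport along $t$-exact equivalences.
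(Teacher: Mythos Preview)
Your proposal is correct and follows essentially the same route as the paper: factorize via Corollary \ref{c:3steps}, use the $t$-exactness of $\Psi^-$ and of the clean $j_{w_-,*}^{w_-\chi,\chi}$, then reduce the bound for $j_{w_\chi,*}^{\chi,\chi}$ to the simple-reflection case via Corollary \ref{c:littlesteps}, conjugate to an honest simple reflection by Lemma \ref{l:conjsimp}, and finish with the rank-one computation of Equation \eqref{e:intop}. The only superfluous step is your case distinction on whether $s\chi=\chi$: since $s$ is already a simple reflection of $W_\chi$, one always has $s\chi=\chi$, so that case split is vacuous.
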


\begin{re} The bounds \eqref{e:cohamp} are essentially optimal. On the one hand, the upper bound is always obtained; indeed it follows from Corollary \ref{c:3steps} that $\Psi^{w}$ sends Verma modules to twisted Verma modules concentrated in degree zero. On the other hand, we will see momentarily that reductions of dominant irreducibles, when nonzero, are concentrated in degree $ - \ell_\chi(w_\chi)$. 
\end{re}

\begin{proof}[Proof of Theorem \ref{t:cohamp}] In view of Corollary \ref{c:3steps}, we equivalently must show that the composition 
\[
      \gk\mod^{I, \chi} \xrightarrow{ j_{{w}_\chi, *}^{\chi, \chi} } \gk\mod^{I, \chi} \xrightarrow{ j_{{w}_-, *}^{w_-\chi, \chi}} \gk\mod^{I, w_-\chi} \xrightarrow{\Psi^-} \sW\mod,
\]
has cohomological amplitude at most $[-\ell_\chi(w_\chi), 0]$. Recall that the middle and rightmost functors are $t$-exact, see Proposition \ref{p:cleanconv} and Theorem \ref{t:araminus}, respectively. Therefore, it is enough to show that $j_{{w}_\chi, *}^{\chi, \chi}$ has cohomological amplitude at most $$[-\ell_\chi(w_\chi), 0].$$

To see this, by Corollary \ref{c:littlesteps} it suffices to show that for a simple reflection $s$ of $W_\chi$ the intertwining integral $j_{s, *}^{\chi, \chi}$ has cohomological amplitude at most$$[-1, 0].$$By Lemma \ref{l:conjsimp} and Proposition \ref{p:cleanconv}, we may assume that $s$ is also a simple reflection of $W$. However, this final case is clear, for example by the analog of Equation \eqref{e:intop}.  \end{proof}

\subsection{} To state our main theorem,  we now introduce one final piece of notation, and then review several others. 

Let us write  $\ftp$ for the set of dominant weights at level $\kappa$. This carries an action of $W$, which we denote by
\[
   W \times \ftp \rightarrow \ftp, \quad \quad (w, \lambda) \mapsto w * \lambda, 
\]
defined as follows. Given a pair $(w, \lambda)$, the orbit of $w\lambda$ under its integral Weyl group contains a unique dominant weight $w\lambda^+$, and we set 
\[
     w * \lambda := w \lambda^+. 
\]
It is straightforward that this defines an action. 
%

For the statement of the following theorem we recall that $\pi$ denotes the Harish-Chandra projection
\[
   \pi: \ft^* \rightarrow \ft^* /\hspace{-1.1mm}/ W_f,
\]
and that a weight $\mu \in \ft^*$ is said to be {finite antidominant} if for every finite positive coroot $\halpha$ one has
\[
    \langle \halpha, \lambda \rangle \notin \mathbb{Z}^{> 0},
\]
where the appearing pairing incorporates a $\rho$-shift as in Section \ref{sss:intwylgroup}. Finally, we refer to Section \ref{s:minusred} for our normalization of highest weights for $\sW$-modules. We are now ready to prove our main result.

\begin{theo} \label{t:maintheorem5}Fix an element $w \in \Wf$ and a dominant irreducible module
\[
     L_\lambda \in \gk\mod^{I, \chi}.
\]
Let $w_\chi$ be as in Proposition \ref{p:3steps}. Then the corresponding spectrally flowed Drinfeld--Sokolov reduction is given by  
\[
\Psi^{\dot{w}}(L_\lambda) \simeq \begin{cases} L_{\pi(w * \lambda)}[\ell_\chi(w_\chi)] & \text{if $w * \lambda$ is finite antidominant,}  \\ 0 & \text{otherwise}. \end{cases}
\]

\end{theo}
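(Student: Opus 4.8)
The plan is to run the factorization of Corollary~\ref{c:3steps},
\[
\Psi^{\dot{w}} \simeq \Psi^- \circ j_{w_-, *}^{w_-\chi, \chi} \circ j_{w_\chi, *}^{\chi, \chi},
\]
through the irreducible module $L_\lambda$ one step at a time, starting from the right. The key point—and I expect it to be the main obstacle—is to control the first functor $j_{w_\chi, *}^{\chi, \chi}$ on $L_\lambda$, since in general this functor is far from $t$-exact and hard to compute. The mechanism that makes it tractable is precisely the dominance (``maximal integrability'') of $\lambda$: I claim that on $L_\lambda$ the functor $j_{w_\chi,*}^{\chi,\chi}$ reduces to a cohomological shift of the identity, namely $j_{w_\chi,*}^{\chi,\chi}(L_\lambda)\simeq L_\lambda[\ell_\chi(w_\chi)]$. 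To prove this, by Corollary~\ref{c:littlesteps} I would write $w_\chi=s_{i_1}\cdots s_{i_\ell}$ as a reduced word in the Coxeter generators of $W_\chi$ and convolve one simple reflection at a time, and by Lemma~\ref{l:conjsimp} and Proposition~\ref{p:cleanconv} I may assume each $s=s_{i_j}$ is a genuine simple reflection of $W$. So everything comes down to the following: if $s$ is a simple reflection of $W_\chi$ that is also simple in $W$, and $L_\mu$ is dominant with $\langle\check\alpha_s,\mu\rangle\in\mathbb{Z}^{<0}$ being impossible (which is forced by dominance, giving $\langle\check\alpha_s,\mu\rangle\in\mathbb{Z}^{\geqslant 0}$ since $s\in W_\mu$), then $j_{s,*}^{\chi,\chi}\overset I\star L_\mu\simeq L_\mu[1]$. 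This I would check by the parabolic induction reduction of Proposition~\ref{p:cleanconv}: pass to the rank-one Levi $\mathfrak m$, where the relevant finite-dimensional or Verma-type $\mathfrak{sl}_2$-module theory makes the computation of $j_{s,*}$ on the relevant highest weight module completely explicit via Equation~\eqref{e:intop}. The upshot is that after the first step we have landed on $L_\lambda[\ell_\chi(w_\chi)]\in\gk\mod^{I,\chi}$.

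The remaining two steps are $t$-exact, so the shift $[\ell_\chi(w_\chi)]$ simply passes through. For the middle step, Proposition~\ref{p:cleanconv} applies since $j_{w_-,*}^{w_-\chi,\chi}$ is clean (Proposition~\ref{p:3steps}), giving $j_{w_-,*}^{w_-\chi,\chi}\overset I\star L_\lambda\simeq L_{w_-\lambda}$. Here I should be careful: $w_-\lambda$ need not be the dominant representative of its $W_{w_-\chi}$-orbit, so to feed it into Arakawa's theorem I should first track things in terms of the $W$-action $*$ on dominant weights, and observe that $\pi(w_-\lambda)=\pi(w*\lambda)$ up to the relevant identifications—indeed, since $\Psi^-$ kills the $W_f$-dot-orbit down to the Harish--Chandra projection and since $w*\lambda$ differs from $w_-\lambda$ (after the $w_f$ on the left is absorbed) by an element of the integral Weyl group of $w_-\chi$, the Harish--Chandra projections and antidominance conditions match. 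For the final step, apply Arakawa's Theorem~\ref{t:araminus}: $\Psi^-(L_{w_-\lambda})$ equals $L_{\pi(w_-\lambda)}=L_{\pi(w*\lambda)}$ if $w_-\lambda$ (equivalently $w*\lambda$) is finite antidominant, and $0$ otherwise. Combining the shift from the first step with these two $t$-exact steps yields
\[
\Psi^{\dot{w}}(L_\lambda)\simeq
\begin{cases} L_{\pi(w*\lambda)}[\ell_\chi(w_\chi)] & \text{$w*\lambda$ finite antidominant},\\ 0 & \text{otherwise},\end{cases}
\]
as desired.

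The one point requiring genuine care, beyond the computation of $j_{s,*}$ on a dominant simple, is the bookkeeping connecting the geometric side—the elements $w_-,w_\chi$ of Proposition~\ref{p:3steps} and the naive weight $w_-\lambda$—to the representation-theoretic side: the dominant weight $w*\lambda$, the length $\ell_\chi(w_\chi)$ viewed inside the Coxeter system $W_\chi=W_\lambda$, and the finite-antidominance condition. I would handle this by checking that the canonical factorization $e^{\check\mu}=w_f w_- w_\chi$ of Equation~\eqref{e:cannonfact} interacts correctly with the $*$-action: the $W_\chi$-translate of $\lambda$ to its dominant representative is what $w_\chi$ contributes, while $w_-$ moves between integral blocks and $w_f$ is invisible to $\Psi^-$. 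This identifies $\ell_\lambda(w_\lambda)$ of the statement of Theorem~\ref{t:maintheorem} with $\ell_\chi(w_\chi)$ here, and identifies ``$(\check\mu,\lambda)$ good'' with the triviality of the parabolic subgroup~\eqref{e:parsb}, i.e.\ with $w_\chi$ being well-defined by a genuine factorization rather than merely a coset; when~\eqref{e:parsb} is nontrivial one is in the ``otherwise'' case where, tracing through, $w*\lambda$ fails to be finite antidominant and the reduction vanishes. With these identifications in place Theorem~\ref{t:maintheorem5} is exactly Theorem~\ref{t:maintheorem}.
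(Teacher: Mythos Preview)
Your overall strategy matches the paper's exactly: run the factorization of Corollary~\ref{c:3steps} through $L_\lambda$, reduce the first step to a simple reflection via Corollary~\ref{c:littlesteps} and Lemma~\ref{l:conjsimp}, and finish with Proposition~\ref{p:cleanconv} and Arakawa's Theorem~\ref{t:araminus}. The bookkeeping paragraph at the end is also reasonable, and in fact more explicit than the paper, which simply asserts that applying $\Psi^-$ to $L_{w_-\lambda}[\ell_\chi(w_\chi)]$ ``yields the statement of the theorem.''

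There is, however, a gap at the crucial step, namely the claim $j_{s,*}^{\chi,\chi}\overset I\star L_\lambda\simeq L_\lambda[1]$ for $s$ a simple reflection of $W$ lying in $W_\chi$. You propose to check this ``by the parabolic induction reduction of Proposition~\ref{p:cleanconv}: pass to the rank-one Levi $\mathfrak m$.'' But that reduction in Proposition~\ref{p:cleanconv} works because the Verma module $M_\mu$ \emph{is} parabolically induced, $M_\mu\simeq\on{pind}_{\mathfrak m}^{\gk}(M_\mu^{\mathfrak m})$, and $\on{pind}$ is $D(P)$-equivariant. The simple module $L_\lambda$ is \emph{not} parabolically induced from an $\mathfrak m$-module in general (the parabolic induction of the finite-dimensional simple $\mathfrak m$-module is a generalized Verma, not $L_\lambda$), so you cannot simply ``pass to $\mathfrak m$'' and invoke $\mathfrak{sl}_2$-theory there.

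The paper's argument is different and uses a more robust input: dominance of $\lambda$ together with $s\in W_\chi$ forces $\langle\check\alpha_s,\lambda\rangle\in\mathbb Z^{\geqslant 0}$, so $L_\lambda$ is integrable for the rank-one Levi and hence lies in $\gk\mod^{P,\chi}$, where $P$ is the standard parahoric for $s$ and $\chi$ extends to $P$. One then shows, for \emph{any} $D_\kappa(LG)$-module $\sC$, that the composite $\sC^{P,\chi}\xrightarrow{\on{Oblv}}\sC^{I,\chi}\xrightarrow{j_{s,*}^{\chi,\chi}}\sC^{I,\chi}$ agrees with $\on{Oblv}[1]$, by identifying both with convolution by the same object of $D_\kappa(I,\chi\backslash LG/P,\chi)$; this is a kernel computation on the rank-one Levi. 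Equivalently, in the language of \eqref{e:intop} lifted to $\gk$: since $L_\lambda$ is already equivariant for the opposite Iwahori radical and for the Levi $M$, both $\on{Av}_{N^s,*}$ and $\delta_{\dot s}$ act as the identity on it, leaving only the shift $[1]$. Either way, the point is $(P,\chi)$-equivariance of $L_\lambda$, not a parabolic-induction presentation of it.
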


\begin{proof} We will proceed step by step along the factorization of Corollary \ref{c:3steps}, namely
\[
     \Psi^{w} \simeq \Psi^- \circ j_{w_-, *}^{w_- \chi, \chi} \circ j_{w_\chi, *}^{\chi, \chi}.  
\]

To begin, we claim that one has an isomorphism
\begin{equation} \label{e:1ststep}  j_{w_\chi, *}^{\chi, \chi} \sI L_\lambda \simeq L_{\lambda} [\ell_\chi(w_\chi)]. 
\end{equation}
To see this, by Corollary \ref{c:littlesteps} it is enough to show that for a simple reflection $s$ of $W_\chi$ one has
\[
   j_{s, *}^{\chi, \chi} \sI L_\lambda \simeq L_{\lambda}[1]. 
\]
By Lemma \ref{l:conjsimp} and Proposition \ref{p:cleanconv}, we may further assume that $s$ is a simple reflection of $W$. In this case, let $P$ be the corresponding parahoric subgroup of $LG$, and note that $\chi$ extends uniquely to a character sheaf on $P$, which we denote by the same letter. 

 As $L_\lambda$ is $(P, \chi)$-equivariant, it suffices to see that for any $D_\kappa(LG)$-module $\sC$ the composition 
\begin{equation}  \label{e:1comp}
    \sC^{P, \chi} \xrightarrow{\on{Oblv}} \sC^{I, \chi} \xrightarrow{j_{s, *}^{\chi, \chi}} \sC^{I, \chi}
\end{equation}
is equivalent to 
\begin{equation} \label{e:2comp}
  \sC^{P, \chi} \xrightarrow{\on{Oblv}[1]} \sC^{I, \chi}. 
\end{equation}
However, it is straightforward to see by a calculation on the Levi factor of $P$, which is of semisimple rank one, that both \eqref{e:1comp} and \eqref{e:2comp} are given by convolution with the same object of 
\[
       D_\kappa(I, \chi \backslash LG / P, \chi),
\]
as desired. This completes the proof of \eqref{e:1ststep}. 

To proceed, recall that $j_{w_-, *}^{w_- \chi, \chi}$ is clean. By Proposition \ref{p:cleanconv}, it therefore follows that 
\[
    j_{w_-, *}^{w_-, \chi} \sI L_{ \lambda}[\ell_\chi(w_\chi)] \simeq L_{w_- \lambda }[\ell_\chi(w_\chi)].
\]
Finally, applying the minus reduction $\Psi^-$ to $L_{w_-  \lambda}[\ell_\chi(w_\chi)]$ yields the statement of the theorem. 
\end{proof}

\bibliographystyle{amsalpha}
\bibliography{samplez}

\end{document}